\DeclareMathOperator{\Ann}{{\rm Ann}}
\newcommand{\isom}{\cong}
\newcommand{\Z}{{\bf{Z}}}
\newcommand{\Q}{{\bf{Q}}}
\newcommand{\Qbar}{{\overline{\Q}}}
\newcommand{\R}{{\bf{R}}}
\newcommand{\C}{{\bf{C}}}
\newcommand{\F}{{\bf{F}}}
\newcommand{\T}{{\bf{T}}}
\newcommand{\m}{{\mathfrak{m}}}
\newcommand{\Mid}{|} 
\newcommand{\lm}{\Big|}
\newcommand{\miD}{|}
\newcommand{\rmid}{\Big|}
\newcommand{\divs}{\!\mid\!}
\newcommand{\ndiv}{\!\nmid\!}
\newcommand{\tensor}{\otimes}
\newcommand{\ra}{{\rightarrow}}
\newcommand{\lra}{{\longrightarrow}}
\newcommand{\hra}{\hookrightarrow}
  \newcommand{\textcyr}[1]{%
    {\fontencoding{OT2}\fontfamily{wncyr}\fontseries{m}\fontshape{n}%
     \selectfont #1}}
\newcommand{\Sha}{{\mbox{\textcyr{Sh}}}} 
\newcommand{\Shan}{{\Mid \Sha(A_f) \miD_{\rm an}}}
\newcommand{\ce}{c_{\scriptscriptstyle{E}}}
\newcommand{\Edual}{E^{\vee}}
\newcommand{\Fdual}{F^{\vee}}
\newcommand{\Ihat}{{\widehat{I}}}
\newcommand{\Hom}{{\rm Hom}}
\newcommand{\ord}{{\rm ord}}
\newcommand{\LAf}{{L_{\scriptscriptstyle{E}}}}
\newcommand{\OAf}{{\Omega_{\scriptscriptstyle{E}}}}
\newcommand{\NerA}{{\mathcal{E}}}
\newcommand{\annT}{{{\rm Ann_\T}}}
\newcommand{\Jmd}{{(J'_{\m'})^\vee}}
\newcommand{\comment}[1]{}
\newtheorem{lem}{Lemma}[section]
\newtheorem{cor}[lem]{Corollary}
\newtheorem{prop}[lem]{Proposition}
\newtheorem{conj}[lem]{Conjecture}
\newtheorem{thm}[lem]{Theorem}
\theoremstyle{definition}
\newtheorem{rmk}[lem]{Remark}
\newcommand{\thetitle}
{Visibility and the Birch and Swinnerton-Dyer conjecture
for analytic rank zero}
\begin{document}
\parindent=2em

\title{\thetitle}
\author{Amod Agashe
\footnote{This material is based upon work supported by the National Science 
Foundation under Grant No. 0603668.}}
\date{}
\maketitle

\begin{abstract}
Let $E$ be an optimal elliptic curve over~$\Q$ of conductor~$N$
having analytic rank zero, i.e., 
such that the $L$-function~$L_E(s)$ of~$E$ does not vanish at~$s=1$.
Suppose there is another optimal 
elliptic curve over~$\Q$ of the same conductor~$N$
whose Mordell-Weil  rank is greater than zero and
whose associated newform is congruent to the newform associated to~$E$
modulo an integer~$r$.
The theory of visibility 
then shows that under certain additional hypotheses,
$r$ divides the product of the 
order of the Shafarevich-Tate group of~$E$ 
and the orders of the arithmetic component groups of~$E$.
We extract an explicit integer factor
from the the Birch and Swinnerton-Dyer
{\em conjectural} formula for the product mentioned above, and 
under some hypotheses similar to the ones made in the situation above,
we show that
$r$ divides this integer factor.
This provides theoretical evidence for
the second part of
the Birch and Swinnerton-Dyer conjecture in the analytic rank zero case.
\end{abstract}


\section{Introduction}
\label{sec:bsd}

Let $N$ be a positive integer. Let
$X_0(N)$ be the modular curve over~$\Q$
associated to~$\Gamma_0(N)$, and
let $J=J_0(N)$ denote the Jacobian of~$X_0(N)$, which is
an abelian variety over~$\Q$. 
Let $\T$ denote the Hecke algebra, which is 
the subring of endomorphisms of~$J_0(N)$
generated by the Hecke operators (usually denoted~$T_\ell$
for $\ell \ndiv N$ and $U_p$ for $p\divs N$). 
If $f$ is a newform of weight~$2$ on~$\Gamma_0(N)$, then 
let $I_f = \annT f$ and let $A_f$ denote the associated {\em newform
quotient} $J/I_f J$, which is
an abelian variety over~$\Q$. 
Let $\pi$ denote the quotient map $J \ra J/I_f J = A_f$.
By the {\em analytic rank} of~$f$, we mean the
order of vanishing at $s=1$ of~$L(f,s)$. 
The analytic rank of~$A_f$ is then the analytic rank of~$f$ times the dimension of~$A_f$.
Now suppose that the newform~$f$ has integer Fourier coefficients.
Then $A_f$ is an elliptic curve, and we denote it by~$E$ instead.
Since $E$ has dimension one, its analytic rank is the same
as that of~$f$.

Now suppose that $\LAf(1) \neq 0$ (i.e., $f$ has analytic rank zero). Then 
by~\cite{kollog:finiteness}, $E$ has Mordell-Weil rank zero,
and the Shafarevich-Tate group~$\Sha(E)$ of~$E$ is finite.
Let $\NerA$ denote the N\'eron model of~$E$ over~$\Z$
and let $\NerA^0$ denote the largest open subgroup scheme
of~$\NerA$ in which all the fibers are connected.
Let $\OAf$ denote the volume of~$E(\R)$ with
respect to the measure given by
a generator of
the rank one $\Z$-module of invariant
differentials on~$\NerA$.
If $p$ is a prime number, then the group of~$\F_p$-valued
points of the quotient~$\NerA_{\F_p}/\NerA^0_{\F_p}$ is called
the (arithmetic) component group of~$A$ and its order
is denoted $c_p(A)$.
Throughout this article, we use the 
symbol~$\stackrel{?}{=}$ to denote a conjectural equality.

Considering that $\LAf(1) \neq 0$,
the second part of the Birch and Swinnerton-Dyer conjecture says the following:

\begin{conj}[Birch and Swinnerton-Dyer] \label{bsd2}
\begin{eqnarray} \label{bsdform}
\frac{\LAf(1)}{\OAf} 
\stackrel{?}{=}
\frac {\Mid \Sha(E) \miD \cdot \prod_{\scriptscriptstyle{p |N}}  c_p(E)}
      { \Mid E(\Q) \miD^2}\ .
\end{eqnarray}
\end{conj}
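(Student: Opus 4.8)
The plan is to treat the conjectural identity~\eqref{bsdform} as a theorem to be established for the analytic-rank-zero elliptic curve~$E$, and to prove it by verifying it $p$-adically, one prime at a time. Since $\LAf(1)\neq 0$, the theorem of Kolyvagin--Logachev cited above guarantees that $E(\Q)$ is finite and $\Sha(E)$ is finite, so both sides are well-defined positive rational numbers. It therefore suffices to show that $\ord_p$ of the left-hand side equals $\ord_p$ of the right-hand side for every prime~$p$. First I would record, following Manin, that the ratio $\LAf(1)/\OAf$ is rational and computable as a modular symbol, so that its $p$-adic valuation is accessible; the right-hand side, meanwhile, is governed by the $p$-part of $\Sha(E)$, the local Tamagawa factors $c_\ell(E)$, and the order of~$E(\Q)$.

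The heart of the matter is Iwasawa-theoretic, and I would first carry it out at a good ordinary prime~$p$ at which the residual representation $\bar\rho_{E,p}$ is surjective. I would invoke the interpolation property of the Mazur--Swinnerton-Dyer $p$-adic $L$-function $L_p(E,s)$, whose value at the trivial character is $(1-\alpha_p^{-1})^2\,\LAf(1)/\OAf$ for $\alpha_p$ the unit root of Frobenius, thereby converting the complex $L$-value into a $p$-adic quantity. Next I would apply the cyclotomic Iwasawa Main Conjecture --- Kato's Euler system for one divisibility and Skinner--Urban for the reverse --- to identify the characteristic ideal of the Pontryagin dual of the Selmer group over the cyclotomic $\Z_p$-extension with the ideal generated by~$L_p(E,s)$. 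Mazur's control theorem then specializes this equality down to~$\Q$, and the resulting Euler-characteristic computation for the $p$-Selmer group produces precisely the contributions $\ord_p \Mid \Sha(E) \miD$, $\sum_{\ell \divs N}\ord_p c_\ell(E)$, and $-2\,\ord_p \Mid E(\Q) \miD$, matching the $p$-adic valuation of~$\LAf(1)/\OAf$.

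The main obstacle is uniformity across \emph{all} primes, since the clean Iwasawa input requires hypotheses that fail for a thin but essential set. I would have to treat separately: the primes $p\divs N$ of multiplicative or additive reduction, where the local factor and the shape of $L_p$ change; the supersingular primes, where the ordinary main conjecture must be replaced by the signed theory of Kobayashi together with Pollack's $p$-adic $L$-functions; the prime $p=2$, where $2$-adic subtleties in the periods and in the control theorem intervene; and the primes at which $\bar\rho_{E,p}$ is reducible or non-surjective, where the Euler-system bound degrades and a direct analysis of the relevant Selmer and component groups is needed. Assembling these local verifications into the single rational identity~\eqref{bsdform}, and in particular pinning down the $2$-part and the reducible primes, is where the genuine difficulty lies.
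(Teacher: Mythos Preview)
The statement you are attempting to prove is labelled \emph{Conjecture} in the paper, and the paper does not prove it.  The symbol $\stackrel{?}{=}$ is explicitly introduced a few lines earlier to denote a \emph{conjectural} equality, and the surrounding text makes clear that \eqref{bsdform} is the second part of the Birch and Swinnerton--Dyer conjecture, stated as background and motivation for the results of the paper, not as something to be established.  There is therefore no proof in the paper to compare your proposal against.

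Your proposal, taken at face value, is a sketch of a proof of the full $p$-part of BSD for every prime~$p$ and every analytic-rank-zero modular elliptic curve over~$\Q$.  That is an open problem.  The Iwasawa-theoretic machinery you invoke (Kato's Euler system for one divisibility, Skinner--Urban for the other, Mazur's control theorem to descend) does yield the $p$-part of~\eqref{bsdform} under restrictive hypotheses --- typically $p$ odd, $p$ of good ordinary reduction, and $\overline{\rho}_{E,p}$ irreducible or surjective --- but you correctly identify in your last paragraph that the bad primes ($p=2$, primes of additive reduction, primes where the residual representation is reducible) are not covered by these inputs.  Those are not technicalities to be cleaned up; they are precisely where the conjecture remains open.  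So the ``genuine difficulty'' you flag at the end is in fact a gap that current methods do not close, and your proposal should be read as an outline of what is known and what is not, rather than as a proof.
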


It is known that ${\LAf(1)}/{\OAf}$ is a rational number.
The importance of the second part of
the Birch and Swinnerton-Dyer conjecture is that it gives a conjectural
value of~$\Mid \Sha(E) \miD$ in terms of the other quantities
in~(\ref{bsdform}) (which can often be computed). Let us denote this 
conjectural value of~$\Mid \Sha(E) \miD$ 
by~$\Mid \Sha(E) \miD_{\rm an}$ (where ``an'' stands for ``analytic'').
The theory of
Euler systems has been used to bound~$|\Sha(E)|$ 
from above in terms~$\Shan$ 
as in the work
of Kolyvagin and of Kato (e.g., see \cite[Thm $8.6$]{rubin:eulerell}).
Also, the Eisenstein series method 
is being used by Skinner-Urban (as yet unpublished)
to try to show that $\Shan$ divides~$|\Sha(E)|$.
In both of the methods above,
one may have to stay away from certain primes. 

The conjectural formula~(\ref{bsdform}) may be rewritten as follows:
\begin{eqnarray} \label{bsdformula}
|E(\Q)|^2 \cdot \frac{\LAf(1)}{\OAf}
\stackrel{?}{=}
{\Mid \Sha(E) \miD \cdot \prod_{\scriptscriptstyle{p |N}}  c_p(E)} \ .
\end{eqnarray}
We shall refer to the formula above as the Birch and Swinnerton-Dyer
conjectural formula.

Now suppose that $f$ is congruent modulo a prime~$p$
to another newform~$g$ that has
integer
Fourier coefficients and whose associated elliptic curve has
positive Mordell-Weil rank.  
Let $r$ denote
the highest power of~$p$ modulo which this congruence holds.
Then the theory of visibility 
(e.g., as in~\cite{cremona-mazur})
often shows
that $r$ divides ${\Mid \Sha(E) \miD \cdot \prod_{\scriptscriptstyle{p |N}}  c_p(E)}$,
the right side of the Birch and Swinnerton-Dyer conjectural  formula~(\ref{bsdformula});
we give precise results along these lines
in Section~\ref{sec:vis}. When this happens,
the conjectural formula~(\ref{bsdformula}) says that $r$ should also divide
the left side of~(\ref{bsdformula}), which is
$|E(\Q)|^2 \cdot \frac{\LAf(1)}{\OAf}$ (since it is not known that
the rational number $|E(\Q)|^2 \cdot \frac{\LAf(1)}{\OAf}$ is an integer,
what we mean here and henceforth
is that the order at~$p$ of this rational number is at least
${\ord_p} r$).
In Section~\ref{sec:visfac}, we show that this does happen
under somewhat similar hypotheses.
In Section~\ref{sec:proofs}, we give the proof of our main result 
(Theorem~\ref{thm:main}); 
in the proof, we actually
extract an explicit integer factor from 
$|E(\Q)|^2 \cdot \frac{\LAf(1)}{\OAf}$,
and under certain hypotheses,
we show that  $r$ divides this integer factor. 
The reader who is interested in seeing only the precise statements of our 
main results 
may read Sections~\ref{sec:vis} and~\ref{sec:visfac},
skipping proofs. 
In each section, we continue to use the notation introduced
in earlier sections (unless mentioned otherwise).

We remark that the results of this article are very analogous to the results
obtained in~\cite{ag:visrk1}, where we treated the case where $E$
had analytic rank one. We also take the opportunity
to point out some mistakes in~\cite{ag:visrk1} (see Remark~\ref{rmk:correction}).
Finally, our results for
the case where $r = p$ (i.e., if $f$ and~$g$ are
congruent modulo~$p$, but not modulo~$p^2$), are covered to some
extent in~\cite{agmer}. In fact, the present article arose from
our efforts to generalize some of the results in~\cite{agmer}.\\

\noindent {\it Acknowledgements:} We are grateful to M.~Emerton
for pointing out some errors related to the statement of
Lemma~\ref{lem:containment2}
in an earlier version of this article.

\section{Visibility and the right side of the Birch and Swinnerton-Dyer conjectural  formula}
\label{sec:vis}

Let $F$ denote the elliptic curve  associated to the newform~$g$.
If $A$ is an abelian variety, then we denote its dual abelian variety by~$A^\vee$. 
If $h$ is a newform of weight~$2$ on~$\Gamma_0(N)$, then 
by taking the dual of the quotient map $J_0(N) \ra A_h$
and using the self-duality of~$J_0(N)$, we may view
$A_h^\vee$ as an abelian subvariety of~$J_0(N)$. In particular, we may view
$\Edual$ and~$\Fdual$ as abelian subvarieties of~$J_0(N)$.
We say that a maximal ideal~$\m$ of~$\T$
satisfies {\em multiplicity one} if
$J_0(N)[\m]$ is two dimensional over~$\T/\m$.
Consider the following hypothesis on~$p$:\\
(*) if $\m$ is a maximal ideal of~$\T$ with residue characteristic~$p$
and $\m$ is in the support of~$J_0(N)[I_f + I_g]$, 
then $\m$ satisfies multiplicity one.


The following lemma is Lemma~2.1 from~\cite{ag:visrk1}; we repeat the statement
here since we shall refer to it several times.
\begin{lem} \label{lem:multone}
Suppose $p$ is odd, and either\\
(i) $p \ndiv N$ or\\
(ii) $p || N$ and $E[p]$ or $F[p]$ is irreducible. \\
Then $p$ satisfies hypothesis~(*).
\end{lem}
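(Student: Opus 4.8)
The plan is to reduce the assertion to the classical multiplicity one theorems of Mazur and Ribet. The point of the hypotheses is exactly to guarantee that the relevant maximal ideals fall within the range where those theorems apply.

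First I would identify the maximal ideals in play. If $\m$ lies in the support of the $\T$-module $J_0(N)[I_f+I_g]$, then, this module being finite (hence finitely generated over the Noetherian ring $\T$) and annihilated by $I_f+I_g$, $\m$ must contain $I_f+I_g$; in particular $I_f\subseteq\m$ and $I_g\subseteq\m$. Since $f$ and $g$ have rational Fourier coefficients, $\T/I_f\isom\Z\isom\T/I_g$, so the residue field $\T/\m$ is $\F_p$ and $\m$ is the kernel of the map $\T\ra\F_p$ given by reducing the $q$-expansion of $f$ --- equivalently of $g$ --- modulo~$p$. Let $\overline{\rho}_\m\colon\mathrm{Gal}(\Qbar/\Q)\ra\mathrm{GL}_2(\F_p)$ be the attached semisimple representation. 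By the Eichler--Shimura congruence relation, together with Brauer--Nesbitt and the Chebotarev density theorem, $\overline{\rho}_\m$ is isomorphic to the semisimplification of $E[p]$, and also to that of $F[p]$. Consequently, in case~(ii) the hypothesis that $E[p]$ or $F[p]$ is irreducible forces $\overline{\rho}_\m$ itself to be irreducible.

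Then I would feed this into the known results. In case~(i), $p\ndiv N$, so $J_0(N)$ has good reduction at~$p$ and $J_0(N)[\m]$ is the generic fibre of a finite flat group scheme over $\Z_p$; Mazur's multiplicity one theorem (as refined by Mazur--Ribet and Wiles) then gives $\dim_{\F_p}J_0(N)[\m]=2$ for every such $\m$, no irreducibility assumption being needed once $p$ is odd. In case~(ii), $p\|N$, so $J_0(N)$ has purely toric reduction at~$p$ of the usual Deligne--Rapoport type; combining the irreducibility of $\overline{\rho}_\m$ obtained above with Ribet's multiplicity one theorem for irreducible residual representations at a prime exactly dividing the level again yields $\dim_{\F_p}J_0(N)[\m]=2$. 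In either case every maximal ideal $\m$ of residue characteristic~$p$ in the support of $J_0(N)[I_f+I_g]$ satisfies multiplicity one, which is precisely hypothesis~(*).

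I expect the only genuine subtlety to be bookkeeping about the scope of the cited theorems. For $p\|N$, multiplicity one really does fail for certain reducible $\overline{\rho}_\m$, so the irreducibility hypothesis on $E[p]$ or $F[p]$ in case~(ii) cannot be dropped, and the identification $\overline{\rho}_\m\isom(E[p])^{\mathrm{ss}}\isom(F[p])^{\mathrm{ss}}$ must be carried out carefully, including its behaviour at the primes dividing~$N$. One should likewise verify that the good-reduction statement invoked in case~(i) does not tacitly require $\overline{\rho}_\m$ to be irreducible, and that $p$ odd is all that is used there (at $p=2$ the conclusion can fail).
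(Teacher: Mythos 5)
You should note first that the paper itself contains no proof of this lemma: it is quoted verbatim from Lemma~2.1 of~\cite{ag:visrk1}, so the only ``proof'' given here is that citation, and the comparison is really with the argument in that earlier paper. Your opening reduction is correct and is surely the same skeleton as the source: a maximal ideal $\m$ in the support of the finite module $J_0(N)[I_f+I_g]$ contains $I_f+I_g$, hence $I_f$ and $I_g$; since $f,g$ are rational its residue field is $\F_p$; and $\overline{\rho}_\m$ is the semisimplification of $E[p]$ and of $F[p]$, so in case~(ii) irreducibility of $E[p]$ or $F[p]$ forces $\overline{\rho}_\m$ irreducible. Up to that point there is nothing to object to.

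The genuine gap is in the multiplicity one theorems you then invoke, precisely at the spot you yourself flag as ``bookkeeping.'' For case~(i) you assert that for odd $p\nmid N$ multiplicity one holds for \emph{every} maximal ideal of residue characteristic~$p$, with ``no irreducibility assumption being needed.'' No such theorem exists at composite level: Mazur's result covering reducible (Eisenstein) maximal ideals in~\cite{mazur:eisenstein} is proved for prime level only, and the general-level refinements you allude to (Mazur, Mazur--Ribet, Wiles, Edixhoven) all hypothesize that $\overline{\rho}_\m$ is irreducible; moreover multiplicity one is in fact known to fail for certain Eisenstein maximal ideals of odd residue characteristic prime to a squarefree composite level (work of Yoo), so the verification you defer cannot go through in the generality you use. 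Handling a reducible $E[p]$ in case~(i) therefore requires either the precise statement that Lemma~2.1 of~\cite{ag:visrk1} actually rests on, or an argument exploiting the special shape of~$\m$ (it contains $I_f+I_g$ for two rational newforms of level exactly~$N$), neither of which you supply. Case~(ii) has a similar, if milder, defect: the results for $p\,\|\,N$ with $\overline{\rho}_\m$ irreducible carry extra hypotheses beyond irreducibility --- typically that $\overline{\rho}_\m$ is not finite at~$p$ (Mazur--Ribet), or $U_p$/ordinarity conditions in the $p$-old case --- and multiplicity two does occur for $p$-finite irreducible representations at such levels; so you would need to verify those hypotheses (e.g., from the multiplicative-reduction description of $E[p]$, $F[p]$ at~$p$) or again quote the exact statement used in~\cite{ag:visrk1}. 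As written, the citations doing all the work are overstated, and filling them in correctly is the entire content of the lemma.
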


\begin{prop} \label{prop:agst}
(i) 
Suppose that 
$p$ is coprime to 
$$N \cdot |(J_0(N)/\Fdual)(\Q)_{\rm tor}|
\cdot |F(\Q)_{\rm tor}| 
\cdot \prod_{\scriptscriptstyle{{\ell \mid N}}} \big( c_\ell(F) 
\cdot c_\ell(E) \big).$$
Then $r$ 
divides ${\Mid \Sha(E) \miD}$.
If moreover we assume the parity conjecture, then
$r^2$ 
divides ${\Mid \Sha(E) \miD}$.
\\
(ii) Suppose that $p$ is odd,  
that $E[p]$ and~$F[p]$ are irreducible, and that
$p$ does not divide 
$$N \cdot |(J_0(N)/\Fdual)(\Q)_{\rm tor}| \cdot |F(\Q)_{\rm tor}|.$$
Then $p$ 
divides ${\Mid \Sha(E) \miD \cdot
\prod_{\scriptscriptstyle{p |N}}  c_p(E)}$, 
the right hand side of
the Birch and Swinnerton-Dyer formula~(\ref{bsdformula}).
If we assume moreover that $f$ is not congruent modulo
a prime ideal over~$p$ to a newform
of a level dividing~$N/\ell$ for some prime~$\ell$ that divides~$N$
(for Fourier coefficients of index coprime to~$Np$),
and that either $p \nmid N$ or for all primes $\ell$ that divide~$N$,
$p \nmid (\ell-1)$, then $p$ 
divides~$\Mid \Sha(E/\Q) \miD$.
\end{prop}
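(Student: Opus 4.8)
The plan is to deduce Proposition~\ref{prop:agst} from the theory of visibility, which links a congruence between the newforms $f$ and $g$ to elements of $\Sha(E)$. First I would set up the visibility machinery: since $f \equiv g$ modulo a prime $\pp$ over $p$ to precise power $r$, the abelian subvarieties $\Edual$ and $\Fdual$ of $J_0(N)$ intersect in a subgroup scheme whose $p$-part reflects this congruence, and one obtains a map from $F(\Q)/p^{?}$ (or from the $\pp$-torsion of $F$, using that $F$ has positive rank) into $\Sha(E)$ after quotienting by $\Fdual$. Concretely, consider $C = J_0(N)/\Fdual$; the exact sequence $0 \to \Fdual \to J_0(N) \to C \to 0$ and the inclusion $\Edual \hookrightarrow J_0(N)$ give, upon pushing out and taking cohomology, a homomorphism whose image lands in the part of $H^1(\Q, \Edual)$ that is everywhere locally trivial, i.e.\ in $\Sha(\Edual) \isom \Sha(E)$. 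The multiplicity one hypothesis~(*), guaranteed here by Lemma~\ref{lem:multone} since $E[p]$ is irreducible and $p \nmid N$ (or $p \| N$), ensures the relevant $\m$-torsion of $J_0(N)$ is two-dimensional, so that the congruence module has the expected size and the map detects the full power $p$ (or $r$) of the congruence.

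Next I would control the kernel and the local conditions. The hypothesis that $p \nmid |(J_0(N)/\Fdual)(\Q)_{\rm tor}|$ is exactly what forces the relevant classes coming from $F(\Q)$ to be nontrivial of order divisible by $p$ in $C(\Q)/p$, so that they survive to give nonzero elements of $H^1(\Q,\Edual)[p]$; the hypothesis $p \nmid |F(\Q)_{\rm tor}|$ together with irreducibility of $F[p]$ handles the injectivity at the $F$ end. For the local triviality one checks, prime by prime, that the image class is unramified/locally trivial: away from $N$ this is automatic by good reduction, and at primes $\ell \mid N$ the obstruction is measured by the component groups $c_\ell(E)$ and $c_\ell(F)$ — this is why part~(i) asks $p$ to be coprime to $\prod_{\ell \mid N} c_\ell(F) c_\ell(E)$. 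In part~(ii), where we do \emph{not} assume $p$ coprime to the component groups, the local class at a bad prime $\ell$ need not die in $\Sha(E)$ but its obstruction is captured by $c_\ell(E)$, so one only gets that $p$ divides the product $\Mid \Sha(E) \miD \cdot \prod_{p \mid N} c_p(E)$ — matching the right side of~(\ref{bsdformula}).

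For the final sentence of~(ii) — upgrading from ``$p$ divides $\Mid \Sha(E)\miD \cdot \prod c_p(E)$'' to ``$p$ divides $\Mid \Sha(E)\miD$'' — the idea is to rule out the possibility that the extra factor of $p$ is absorbed entirely by some $c_\ell(E)$. A prime $\ell \mid N$ can contribute $p$ to $c_\ell(E)$ only in rather constrained ways: either $p \mid (\ell - 1)$ (the tame case, via the structure of the component group at a multiplicative-reduction prime, which is cyclic of order $\ord_\ell(q_E)$ and whose $p$-part interacts with $\mu_{\ell-1}$), or $f$ is congruent mod $\pp$ to a form of lower level $N/\ell$ (level-lowering à la Ribet, which is precisely the condition being excluded). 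Under the stated hypotheses both escape routes are closed, so the local class at every $\ell$ genuinely lies in $\Sha(E)$ and $p \mid \Mid \Sha(E/\Q)\miD$.

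The main obstacle I expect is the bookkeeping in the visibility step: correctly identifying the congruence module $\Edual \cap \Fdual$ (or the relevant quotient) and proving that its order is divisible by $p$ to the right power requires the multiplicity one input and a careful comparison of $\Z[1/2]$- versus $\Z$-integral structures, and one must be scrupulous about the various $(\Q)_{\rm tor}$ and component-group factors entering the kernels of the connecting maps — this is exactly where the earlier version's errors (Remark~\ref{rmk:correction}, and the acknowledgement to Emerton regarding Lemma~\ref{lem:containment2}) lay, so the delicate point is getting the exact list of primes to avoid right rather than the conceptual shape of the argument.
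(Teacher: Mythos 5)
Your overall route---visibility in the style of Agashe--Stein/Cremona--Mazur, with the congruence forcing a large intersection of $\Edual$ and $\Fdual$ inside $J_0(N)$, a map from $F(\Q)$ modulo (a power of) $p$ into $\Sha(\Edual)$, local conditions at $\ell \mid N$ controlled by $c_\ell(E)$ and $c_\ell(F)$, and, for the last claim of (ii), level-lowering plus the $p \nmid (\ell-1)$ condition to rule out $p \mid c_\ell(E)$---is the same as the paper's, which simply invokes \cite[Thm.~3.1]{agst:vis} with $A=\Edual$, $B=\Fdual$, $n=r$ (resp.\ $n=p$), obtains $\Fdual[r]\subseteq\Edual$ from Lemma~\ref{lem:multone} together with Lemma~2.2 of \cite{ag:visrk1} (resp.\ $\Fdual[p]=\Edual[p]$ from Ribet), and for the final statement of (ii) uses \cite[Prop.~4.2]{emerton:optimal} and \cite[Thm.~1.1]{ribet:modreps} exactly as you anticipate.

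There are, however, two genuine gaps. First, the quantitative step that actually produces the divisibility is missing or garbled: what the visibility theorem gives is a map $\Fdual(\Q)/r\Fdual(\Q)\to\Sha(\Edual)$ whose kernel has order at most $r^{\mathrm{rank}\,E(\Q)}$; since $\LAf(1)\neq 0$ forces $\mathrm{rank}\,E(\Q)=0$ by Kolyvagin--Logachev, this kernel is trivial, while $p\nmid |F(\Q)_{\rm tor}|$ and $\mathrm{rank}\,F(\Q)\geq 1$ give $|\Fdual(\Q)/r\Fdual(\Q)|\geq r$. Your appeal to ``injectivity at the $F$ end'' via irreducibility of $F[p]$ is not the mechanism (irreducibility of $F[p]$ is not even a hypothesis of part~(i)); it is the comparison of Mordell--Weil ranks of $E$ and $F$ that does the work. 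Second, you never address the assertion that $r^2$ divides $|\Sha(E)|$ under the parity conjecture: the missing idea is that, $r$ being odd and $f\equiv g\pmod{r}$, the two newforms have the same Atkin--Lehner eigenvalues and hence the same sign in their functional equations, so the parity conjecture forces $\mathrm{rank}\,F(\Q)\geq \mathrm{rank}\,E(\Q)+2=2$, and the same rank comparison then yields the factor $r^2$. Without these two points the proposal establishes neither the exact divisibility by $r$ nor the $r^2$ refinement.
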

\begin{proof}
The proof is similar to the proof of Proposition~3.1 in~\cite{ag:visrk1}.
Both parts of the proposition above
follow essentially from Theorem~3.1 of~\cite{agst:vis}, which uses
the theory of visibility.
For Part~(i), take $A = \Edual$, $B = \Fdual$, and $n=r$ 
in~\cite[Thm.~3.1]{agst:vis}, and
note that 
$\Fdual[r] \subseteq \Edual$ by 
Lemma~\ref{lem:multone} and  \cite[Lemma~2.2]{ag:visrk1}
(for the application of~Lemma~\ref{lem:multone}, note 
that $p\nmid N$ by hypothesis, and 
for the application of Lemma~2.2 of~\cite{ag:visrk1},
note that the analytic ranks of~$f$ and~$g$ do not play
any role in the proof of Lemma~2.2 of loc. cit.). 
Then \cite[Thm.~3.1]{agst:vis} says that there is
a map $\Fdual(\Q) / r\Fdual(\Q) \ra \Sha(\Edual)$,
whose kernel has order at most~$r$ raised to the power
the Mordell-Weil rank of~$E$.
Since $F$ has Mordell-Weil rank higher than~$E$, we see that
$r$ divides ${\Mid \Sha(E) \miD}$.
Since $r$ is odd, and $f$ and~$g$ are congruent modulo~$r$,
we see that $f$ and~$g$ have the same eigenvalue
under the Atkin-Lehner involution, and hence the same sign
in their functional equations. 
Thus if we assume the parity
conjecture, then $\Fdual$ has Mordell-Weil rank 
at least that of~$E$ plus two, and so
by the discussion involving \cite[Thm.~3.1]{agst:vis} above,
$r^2$ divides ${\Mid \Sha(E) \miD}$. This proves Part~(i).

For the first statement in Part~(ii), take $A = \Edual$, $B = \Fdual$, and $n=p$ 
in~\cite[Thm.~3.1]{agst:vis}, and note that 
the congruence of $f$ and~$g$ modulo~$p$
forces $\Fdual[p] = \Edual[p]$ 
by~\cite[Thm.~5.2]{ribet:modreps} (cf.~\cite[p.~20]{cremona-mazur}).
For the second statement in Part~(ii), note 
that the additional hypotheses imply that 
$p$ does not divide~$c_\ell(E)$ or $c_\ell(F)$ for any prime~$\ell$
that divides~$N$, as we now indicate.
By~\cite[Prop.~4.2]{emerton:optimal}, if $p$ divides~$c_\ell(E)$ 
for some prime~$\ell$ that divides~$N$, then for some maximal ideal~$\m$
of~$\T$ having characteristic~$p$ and containing~$I_f$, 
either $\rho_\m$ is finite or reducible
(here, $\rho_\m$ is the canonical 
two dimensional representation associated to~$\m$, e.g., as 
in~\cite[Prop.~5.1]{ribet:modreps}).
Since $E[p]$ is irreducible, this can happen only if 
$\rho_\m$ is finite. But 
this is not possible by~\cite[Thm.~1.1]{ribet:modreps},
in view of the hypothesis that  $f$
is not congruent modulo~$p$ to a newform
of a level dividing~$N/\ell$ for any prime~$\ell$ that divides~$N$
(for Fourier coefficients of index coprime to~$Np$),
and either $p \nmid N$ or for all primes $\ell$ that divide~$N$,
$p \nmid (\ell-1)$. Thus $p$ does not divide~$c_\ell(E)$ for any prime~$\ell$
that divides~$N$. Similarly,
$p$ does not divide~$c_\ell(F)$ for any prime~$\ell$
that divides~$N$,
considering that the hypothesis that $f$
is not congruent modulo~$p$ to a newform
of a level dividing~$N/\ell$ for any prime~$\ell$ that divides~$N$
(for Fourier coefficients of index coprime to~$Np$) applies
to $g$ as well, since
$g$ is congruent to~$f$ modulo~$p$. This finishes the proof
of the proposition.
\end{proof}

See~\cite{cremona-mazur} or~\cite{agst:bsd} for examples where
the theory of visibility proves the existence
of non-trivial elements of  the Shafarevich-Tate group
of an elliptic curve of analytic rank zero. 

\section{Congruences and the left side of the Birch and Swinnerton-Dyer conjectural  formula}
\label{sec:visfac}

Considering that under certain hypotheses, the theory of visibility
(more precisely Proposition~\ref{prop:agst}(i)) implies that $r$ 
divides ${\Mid \Sha(E) \miD}$, 
which divides the right hand side of
the Birch and Swinnerton-Dyer conjectural formula~(\ref{bsdformula}),
under similar hypotheses, one should be able to show that $r$ also divides
$|E(\Q)|^2 \cdot \frac{\LAf(1)}{\OAf}$, which is 
the left hand side of~(\ref{bsdformula}).
The theory of Euler systems says under certain hypotheses
that the order of~$\Sha(E)$ divides its Birch and Swinnerton-Dyer conjectural
order (e.g., as in the work of Kolyvagin and Kato). Thus,
in conjunction with Proposition~\ref{prop:agst}(i), the theory of Euler
systems shows that under certain additional hypotheses, 
$r$ does divides $|E(\Q)|^2 \cdot \frac{\LAf(1)}{\OAf}$. 
For example, we have the following:
\begin{prop} \label{prop:euler}
Suppose that 
$p$ is coprime to 
$$2 \cdot N \cdot |(J_0(N)/\Fdual)(K)_{\rm tor}|
\cdot |F(K)_{\rm tor}| 
\cdot \prod_{\scriptscriptstyle{{\ell \mid N}}} \big( c_\ell(F) 
\cdot c_\ell(E) \big).$$
Assume that the image of the absolute
Galois group of~$\Q$ acting on~$E[p]$ is isomorphic to~${\rm GL}_2(\Z/p\Z)$.
Then $r$ divides 
$|E(\Q)|^2 \cdot \frac{\LAf(1)}{\OAf}$
and the Birch and Swinnerton-Dyer conjectural
order of~$\Sha(E)$.
If moreover we assume the parity conjecture, then
$r^2$ divides
$|E(\Q)|^2 \cdot \frac{\LAf(1)}{\OAf}$ and
the Birch and Swinnerton-Dyer conjectural
order of~$\Sha(E)$.
\end{prop}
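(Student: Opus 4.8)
The plan is to derive Proposition~\ref{prop:euler} by combining Proposition~\ref{prop:agst}(i) with the Euler system divisibility for $\Sha(E)$, using the Birch and Swinnerton-Dyer conjectural formula~(\ref{bsdformula}) as the bookkeeping device that relates the two sides. First I would observe that the coprimality hypotheses here are exactly those of Proposition~\ref{prop:agst}(i) with an extra factor of~$2$ thrown in (and with $K$ in place of~$\Q$, which I would either reconcile with the notation or treat as $K = \Q$), so that proposition immediately gives that $r \mid |\Sha(E)|$, and under the parity conjecture $r^2 \mid |\Sha(E)|$. The point of adjoining the factor~$2$ and strengthening the Galois-image hypothesis to surjectivity onto~$\mathrm{GL}_2(\Z/p\Z)$ is to bring us into the range of applicability of Kolyvagin's Euler system bound (as packaged, e.g., in~\cite[Thm.~8.6]{rubin:eulerell} or Kato's theorem): since $\LAf(1)\neq 0$, these results give that the order of~$\Sha(E)$ divides its Birch and Swinnerton-Dyer conjectural order $\Shan$, at least away from a controlled set of primes, and the hypotheses are designed precisely so that $p$ avoids that bad set.

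Next I would chain these two divisibilities at the prime~$p$. We have $\ord_p r \le \ord_p |\Sha(E)|$ from visibility, and $\ord_p |\Sha(E)| \le \ord_p \Shan$ from Euler systems. By definition $\Shan = |E(\Q)|^2 \cdot \frac{\LAf(1)}{\OAf} \big/ \prod_{\ell \mid N} c_\ell(E)$, so $\ord_p \Shan = \ord_p\!\big(|E(\Q)|^2 \cdot \frac{\LAf(1)}{\OAf}\big) - \sum_{\ell \mid N}\ord_p c_\ell(E)$. But the coprimality hypothesis forces $p \nmid \prod_{\ell \mid N} c_\ell(E)$, so that last sum vanishes and $\ord_p \Shan = \ord_p\!\big(|E(\Q)|^2 \cdot \frac{\LAf(1)}{\OAf}\big)$. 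Combining, $\ord_p r \le \ord_p\!\big(|E(\Q)|^2 \cdot \frac{\LAf(1)}{\OAf}\big)$, which is exactly the assertion that $r$ divides $|E(\Q)|^2 \cdot \frac{\LAf(1)}{\OAf}$ in the sense explained in Section~\ref{sec:bsd}. The parity-conjecture refinement is identical with $r$ replaced by~$r^2$ throughout, using the stronger conclusion of Proposition~\ref{prop:agst}(i). The statement that $r$ (resp.~$r^2$) divides the Birch and Swinnerton-Dyer conjectural order of~$\Sha(E)$ is then either immediate from $r \mid |\Sha(E)| \mid \Shan$, or equivalently follows from what we just proved together with $p \nmid \prod_\ell c_\ell(E)$.

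The main obstacle is pinning down a citable form of the Euler system theorem whose hypotheses are met here and whose conclusion is the exact divisibility $|\Sha(E)| \mid \Shan$ rather than a bound with an auxiliary fudge factor. Kolyvagin's original argument and several of its expositions give the divisibility only up to powers of~$2$ and up to primes of bad reduction or primes where the mod-$p$ representation is non-surjective; this is precisely why the hypotheses include the factor~$2$ in the coprimality condition and demand full surjectivity of the Galois action on~$E[p]$. So the one genuinely delicate point in writing this up is to check that, under these hypotheses, the version of the Euler system bound being invoked really does yield $\ord_p |\Sha(E)| \le \ord_p \Shan$ with no loss at~$p$ — i.e., that $p$ lies outside every exceptional set in the statement of~\cite[Thm.~8.6]{rubin:eulerell} (or the Kato analogue). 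Everything else is the elementary valuation-chasing above, together with the already-established Proposition~\ref{prop:agst}(i); I would also remark, as in~\cite{ag:visrk1}, that the compatibility of signs in the functional equations (used for the parity refinement) carries over verbatim since $r$ is odd and $f \equiv g \pmod r$.
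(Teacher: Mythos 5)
Your proposal is correct and follows essentially the same route as the paper: Proposition~\ref{prop:agst}(i) supplies $r \mid |\Sha(E)|$ (and $r^2$ under the parity conjecture), and the Euler system divisibility $|\Sha(E)| \mid \Shan$ together with $p \nmid \prod_{\ell \mid N} c_\ell(E)$ gives the stated divisibilities. The one ``delicate point'' you flag --- finding a citable Euler system statement with no loss at~$p$ --- is exactly what the paper resolves by invoking \cite[Theorem~13]{stein-wuthrich}, an extension of Kato's theorem, in place of your appeal to \cite[Thm.~8.6]{rubin:eulerell}.
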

\begin{proof}
Proposition~\ref{prop:agst}(i), which uses the
theory of visibility, implies that $r$
divides ${\Mid \Sha(E) \miD}$,
and that $r^2$
divides ${\Mid \Sha(E) \miD}$ if we assume the parity conjecture.
The result now follows by~\cite[Theorem~13]{stein-wuthrich},
which uses the theory of Euler systems and is an extension of a theorem of Kato.
\end{proof}

The pullback of a generator of
the rank one $\Z$-module of invariant
differentials on the N\'eron model of~$E$ to~$X_0(N)$ (under
the modular parametrization) is a multiple
of the differential $2 \pi i f(z) dz$ by a rational number;
this number is
called the Manin constant of~$E$, and we
shall denote it by~$c_{\scriptscriptstyle E}$.
It is conjectured that ~$c_{\scriptscriptstyle E}$ is one,
and one knows that $c_{\scriptscriptstyle E}$ 
is an integer, and that if $p$ is a prime such that $p^2 \nmid 4 N$, 
then $p$ does not divide~$c_{\scriptscriptstyle E}$
(by~\cite[Cor.~4.1]{mazur:rational} 
and~\cite[Thm.~A]{abbes-ullmo}).

\begin{thm} \label{thm:main} 
Suppose that $p$ is odd and satisfies the hypothesis~(*).
Assume that $f$ and~$g$ are not congruent modulo
a prime ideal over~$p$ to any other newforms of level dividing~$N$
(for Fourier coefficients of index coprime to~$Np$).
Suppose that either $p^2 \ndiv N$ or that the Manin 
constant~$\ce$ is one (as is conjectured).
Then $r^2$ divides $|E(\Q)|^2 \cdot \frac{\LAf(1)}{\OAf}$,
the left side of the Birch and Swinnerton-Dyer conjectural  formula~(\ref{bsdformula}).
\end{thm}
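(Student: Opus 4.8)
The goal is to produce, from the Birch and Swinnerton-Dyer conjectural formula, an explicit integer factor of $|E(\Q)|^2 \cdot \LAf(1)/\OAf$ that is divisible by $r^2$, and then to show $r^2$ divides that factor unconditionally. The first step is to recall the formula expressing $\LAf(1)/\OAf$ in terms of modular-symbol data: up to the Manin constant $\ce$ and a power of~$2$, the quantity $\LAf(1)/\OAf$ equals $\lan e, f\ran / \OAf$ for a suitable winding element, and more usefully it is (up to $\ce$ and $2$-power) the index $[\, \Phi(H_1(X_0(N),\Z)^+) : \Phi(\T e) \,]$ or equivalently an order of a quotient of a Hecke module by a winding ideal. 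The point is that this index is an integer (away from $2$ and~$\ce$) and its $p$-part can be computed via the congruence module $\T/(I_f + I_g)$.

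**Key steps.** (1) Reduce to the $p$-primary part. Since $p$ is odd and, by hypothesis, either $p^2 \nmid N$ or $\ce = 1$, the prime $p$ does not divide $\ce$ (by the Mazur/Abbes–Ullmo results quoted), so the factors of $\ce$ and of $2$ are invisible to $\ord_p$. Thus it suffices to show $\ord_p\big(\LAf(1)/\OAf\big) \geq 2\,\ord_p r$, i.e. $r^2 \mid \LAf(1)/\OAf$ in the $p$-adic sense (the $|E(\Q)|^2$ factor is then a bonus, not needed). (2) Interpret $\ord_p$ of $\LAf(1)/\OAf$ via a winding-ideal/congruence-module computation: the analytic rank zero hypothesis forces $\LAf(1)\neq 0$, and a standard Eichler–Shimura plus $q$-expansion argument identifies the $p$-adic valuation of $\LAf(1)/\OAf$ with the valuation of $|\,\T_\m / (I_{f,\m} + J_e)\,|$ or of a related component group, where $J_e$ is the winding ideal and $\m$ runs over maximal ideals of residue characteristic $p$ above $I_f$. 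Hypothesis~(*) (multiplicity one) is what makes $J_0(N)[\m]$ well behaved and allows one to pass between the abelian-variety-level statement and the Hecke-module statement. (3) Bring in the congruence with $g$. Because $f \equiv g \pmod{p^{\,\ord_p r}}$ and neither $f$ nor $g$ is congruent mod a prime over $p$ to any other newform of level dividing~$N$, the localization $\T_\m$ is "as small as possible" — generated by the two characters coming from $f$ and $g$ — so the congruence module $\T_\m/(I_f + I_g)_\m$ has order exactly $p^{\,\ord_p r}$. (4) Since $g$ has positive Mordell–Weil rank and (by the oddness of $r$, as in the proof of Proposition~\ref{prop:agst}) $f$ and $g$ share the sign of their functional equation, one gets a factor of $r$ from the congruence and a second factor of $r$ from the fact that the relevant period lattice / winding submodule must contain $I_g e$-contributions twice over — this is the analogue in the rank-zero setting of the "$r^2$" phenomenon, mirroring the rank-one treatment in \cite{ag:visrk1}. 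Concretely, one shows $(I_f + J_e)_\m \subseteq (I_f + I_g + J_e)_\m$ and that the winding element $e$ lies deep enough in the congruence filtration that $\ord_p |\T_\m/(I_f + J_e)_\m| \geq 2\,\ord_p r$.

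**The main obstacle.** The delicate point is step~(4): squeezing out the factor $r^2$ rather than just $r$. A single factor of $r$ is essentially immediate from the congruence module having order $p^{\ord_p r}$; the square requires showing that the winding ideal $J_e$ is contained in $I_g$ localized at $\m$ (so that $e$ "sees" the $g$-part, giving one power of $r$ from $f\equiv g$ and another from the vanishing of $L(g,1)$ forced by the shared sign and positive rank — here $L(g,1)=0$ is exactly what makes $e$ land in $I_g$ beyond the naive congruence). Making this containment precise, and controlling it integrally at $p$ (not just rationally), is where Lemma~\ref{lem:containment2} (alluded to in the acknowledgements) should enter; this is the technical heart and the step most prone to the kind of error the author flags in \cite{ag:visrk1}. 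The hypotheses that $f,g$ are not congruent to any \emph{other} newform of level dividing~$N$ are used precisely to guarantee that $\T_\m$ is a quotient of $\Z_p[[X]]$-type with exactly the two expected minimal primes, so that no spurious congruence inflates or deflates the relevant orders. Once that containment and the order computation are in hand, combining with the integrality of $\LAf(1)/\OAf$ away from $\ce$ and $2$ finishes the proof.
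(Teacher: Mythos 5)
Your outline assembles the right ingredients (the winding element, the formula of \cite[Thm.~2.1]{agmer} for $\LAf(1)/\OAf$, the vanishing $L(g,1)=0$, hypothesis~(*), and the non-congruence hypothesis), but the crucial step~(4) --- where the square of~$r$ is supposed to appear --- rests on a mechanism that is not correct. You propose one factor of~$r$ from the congruence module $\T_\m/(I_f+I_g)_\m$ and a second factor from $L(g,1)=0$ together with the shared sign of the functional equation. In the actual argument the vanishing of $L(g^\sigma,1)$ is used only to prove Lemma~\ref{lem:Eiscont}, i.e.\ that $\pi''_*(\Im e)\subseteq H_1(E',\Z)$, which makes one of the index factors in Proposition~\ref{prop:fact} a well-defined integer; it contributes no power of~$r$, and the sign of the functional equation plays no role in Theorem~\ref{thm:main} (it enters only the parity-conditional part of Proposition~\ref{prop:agst}). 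The square comes instead from the rank-two (two-dimensional) nature of the $r$-torsion: hypothesis~(*) gives $\Edual[r]=\Fdual[r]$ inside $J_0(N)$, the non-congruence hypothesis is used (via the congruence exponent of $J'^\vee$ and \cite[Thm.~3.6(a)]{ars:moddeg}) to show that $\pi''$ is injective on $p$-power torsion of $\Edual$, $\Fdual$ and $\Edual\cap\Fdual$, so that $E'[r]=F'[r]\cong(\Z/r\Z)^2\subseteq E'\cap F'$ (Lemma~\ref{lem:containment2}); and then $|E'\cap F'|=\lm\frac{H_1(J',\Z)}{H_1(E',\Z)+H_1(F',\Z)}\rmid$ by \cite[Lemma~4.1]{agmer}, which is the explicit factor of the left side divisible by $r^2$. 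The scalar congruence module $\T/(I_f+I_g)$ has order only~$r$; it is the rank-two Hecke module $H_1$ (equivalently the full $r$-torsion of the elliptic curves) that squares it, and your proposal never reaches this point. Your suggested containment ``$J_e\subseteq I_g$ localized at $\m$, integrally'' is not what Lemma~\ref{lem:containment2} says and is not the technical heart of the proof.

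A secondary but genuine gap: you dismiss the factor $|E(\Q)|^2$ as ``a bonus, not needed,'' claiming it suffices to prove $r^2\mid \LAf(1)/\OAf$ $p$-adically. In the identity of Proposition~\ref{prop:fact} the index ${\Mid \pi_*(\T e)/\pi_*(\Im e)\miD}$ occurs in the denominator, and all that is proved is that it divides $|E(\Q)|$ (\cite[Lemma~3.3]{agmer}); so one copy of $|E(\Q)|$ is genuinely consumed in cancelling this denominator, and proving divisibility of $\LAf(1)/\OAf$ itself would require the additional (unproved) statement that this index is prime to~$p$. Likewise, your claimed identification of $\ord_p(\LAf(1)/\OAf)$ with the order of $\T_\m/(I_{f,\m}+J_e)$, and the assertion that the non-congruence hypothesis forces $\T_\m$ to have exactly two minimal primes, are not established and are not how that hypothesis functions here; as written, these steps would need substantial independent justification before the argument could be completed.
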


We shall prove this theorem in Section~\ref{sec:proofs}.

\begin{cor}\label{cor1}
Suppose that $p$ is odd, 
that $f$ and~$g$ are not congruent modulo
a prime ideal over~$p$ to any other newforms of level dividing~$N$
(for Fourier coefficients of index coprime to~$Np$),
and that either\\
(a) $p \ndiv N$ or\\
(b) $p || N$ and $E[p]$ or $F[p]$ is irreducible. \\
Then $r^2$ divides $|E(\Q)|^2 \cdot \frac{\LAf(1)}{\OAf}$
and the Birch and Swinnerton-Dyer conjectural
order of~$\Sha(E)$.
\end{cor}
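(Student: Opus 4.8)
The plan is to derive Corollary~\ref{cor1} from Theorem~\ref{thm:main} and Lemma~\ref{lem:multone} by checking that hypotheses~(a) and~(b) supply everything Theorem~\ref{thm:main} requires, and then to treat the assertion about the Birch and Swinnerton--Dyer conjectural order of~$\Sha(E)$ as a small supplement.

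First I would observe that in both case~(a) and case~(b) one has $p^2 \ndiv N$: in case~(a) because $p \ndiv N$, and in case~(b) because $p || N$ says precisely that $p$ divides~$N$ exactly once. Hence the alternative ``$p^2 \ndiv N$'' in the hypothesis of Theorem~\ref{thm:main} is automatically in force, and nothing need be said about the Manin constant~$\ce$. Next, conditions~(a) and~(b) are exactly the two alternatives~(i) and~(ii) in Lemma~\ref{lem:multone}; since $p$ is odd, Lemma~\ref{lem:multone} applies and tells us that $p$ satisfies hypothesis~(*). Finally, the congruence hypothesis in the corollary --- that $f$ and~$g$ are not congruent modulo a prime ideal over~$p$ to any other newform of level dividing~$N$ (for Fourier coefficients of index coprime to~$Np$) --- is verbatim the congruence hypothesis of Theorem~\ref{thm:main}. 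Thus Theorem~\ref{thm:main} applies directly and yields $r^2 \mid |E(\Q)|^2 \cdot \LAf(1)/\OAf$, the left-hand side of~(\ref{bsdformula}).

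It then remains to pass from the left-hand side of~(\ref{bsdformula}) to the conjectural order $\Mid \Sha(E) \miD_{\rm an}$. By the very definition of $\Mid \Sha(E) \miD_{\rm an}$ one has $\Mid \Sha(E) \miD_{\rm an} \cdot \prod_{\ell \mid N} c_\ell(E) = |E(\Q)|^2 \cdot \LAf(1)/\OAf$, so it suffices to know that $p$ does not divide $\prod_{\ell \mid N} c_\ell(E)$. Here I would appeal to the proof of Theorem~\ref{thm:main} itself: the explicit integer factor it extracts from $|E(\Q)|^2 \cdot \LAf(1)/\OAf$ should be built from a congruence module and from global invariants of~$E$, not from component groups, so I expect it to be a divisor of $\Mid \Sha(E) \miD_{\rm an}$, and then $r^2 \mid \Mid \Sha(E) \miD_{\rm an}$ would fall out of the same argument. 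If that turns out not to be the case, the alternative is to show $p \ndiv c_\ell(E)$ for each $\ell \mid N$ directly: by \cite[Prop.~4.2]{emerton:optimal}, $p \mid c_\ell(E)$ produces a maximal ideal~$\mm$ of~$\T$ of residue characteristic~$p$ containing~$I_f$ whose associated representation $\rho_\mm$ is finite or reducible at~$\ell$, and Ribet's level-lowering theorem \cite[Thm.~1.1]{ribet:modreps} contradicts the weaker statement that $f$ is not congruent modulo a prime over~$p$ to a newform of level dividing~$N/\ell$, once one also knows $\rho_\mm$ is irreducible and, in case~(b), that $p \ndiv (\ell-1)$ for the bad primes~$\ell$.

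The main obstacle is precisely this last point --- controlling the $c_\ell(E)$ --- since the direct level-lowering route is clean in case~(a) (where $p \ndiv N$) but in case~(b) needs the extra, non-automatic input $p \ndiv (\ell-1)$. So the decisive step is to confirm that the proof of Theorem~\ref{thm:main} isolates a factor of $\Mid \Sha(E) \miD_{\rm an}$ rather than of the full product $\Mid \Sha(E) \miD_{\rm an} \cdot \prod_{\ell \mid N} c_\ell(E)$; granting that, Corollary~\ref{cor1} is an immediate consequence of Theorem~\ref{thm:main} and Lemma~\ref{lem:multone}, with no further hypotheses needed.
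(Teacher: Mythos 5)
Your first paragraph matches the paper's own deduction of the first assertion exactly: in both cases $p^2 \nmid N$, so the Manin-constant alternative in Theorem~\ref{thm:main} is moot; conditions (a) and (b) are the two alternatives of Lemma~\ref{lem:multone}, which supplies hypothesis~(*); and Theorem~\ref{thm:main} then gives $r^2 \mid |E(\Q)|^2 \cdot \LAf(1)/\OAf$. No issue there.

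The gap is in how you propose to reach the conjectural order of~$\Sha(E)$. The ``decisive step'' you want to confirm --- that the proof of Theorem~\ref{thm:main} isolates a factor of $\Mid \Sha(E) \miD_{\rm an}$ rather than of the full product $\Mid \Sha(E) \miD_{\rm an} \cdot \prod_{\ell \mid N} c_\ell(E)$ --- is not available. The integer factor extracted in that proof is (up to powers of~$2$) $\Mid H_1(J',\Z)^+/(H_1(F',\Z)^+ + H_1(E',\Z)^+)\miD$, i.e.\ essentially $|E' \cap F'|$, a congruence-module quantity, and the argument only shows that it divides the left side of~(\ref{bsdformula}); nothing in it separates the contribution of $\Sha(E)$ from that of the Tamagawa numbers, and no such separation can come for free, since in general a congruence between $f$ and~$g$ may be absorbed by the $c_\ell$'s. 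The separation has to come from the no-congruence-to-lower-level hypothesis, and that is exactly what the paper does: its proof of the corollary is your ``alternative'' route, namely it invokes the argument given in the proof of Proposition~\ref{prop:agst}(ii) (Emerton's Proposition~4.2 together with Ribet's level-lowering theorem) to conclude that $p \nmid c_\ell(E)$ for every prime~$\ell \mid N$, and then divides~(\ref{bsdformula}) by $\prod_{\ell \mid N} c_\ell(E)$. So your fallback must be promoted to the main argument, not left as a contingency. (Your reservation about case~(b) concerns that cited argument rather than the deduction of the corollary from it: note that the congruence forces $E[p] \cong F[p]$, so irreducibility of one gives both, and the paper simply appeals to the Proposition~\ref{prop:agst}(ii) discussion without restating its auxiliary conditions; but in any event the passage through the component groups is unavoidable, and the hoped-for shortcut through the proof of Theorem~\ref{thm:main} does not exist.)
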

\begin{proof} 
The statement that $r^2$ divides $|E(\Q)|^2 \cdot \frac{\LAf(1)}{\OAf}$
follows from the theorem above,
considering
that the hypothesis~(*) is satisfied, in view of Lemma~\ref{lem:multone}.
By the hypothesis
that $f$ and~$g$ are not congruent modulo
a prime ideal over~$p$ to any other newforms of level dividing~$N$
(for Fourier coefficients of index coprime to~$Np$),
as explained in the proof of Proposition~\ref{prop:agst}(ii),
$p$ does not divide~$c_\ell(E)$ for any prime~$\ell$.
Hence, by~(\ref{bsdformula}), 
$r^2$ divides the Birch and Swinnerton-Dyer conjectural
order of~$\Sha(E)$. 
\end{proof}

In view of Proposition~\ref{prop:agst},
Corollary~\ref{cor1} 
provides theoretical evidence towards 
the Birch and Swinnerton-Dyer conjectural formula~(\ref{bsdformula}).
Corollary~\ref{cor1} may also be compared to the similar 
Proposition~\ref{prop:euler} 
that uses the theory of visibility and the theory
of Euler systems. Note that in Corollary~\ref{cor1}, we do not
assume the following hypotheses of Proposition~\ref{prop:euler}:
$p \ndiv N$ (although we do need that $p^2 \ndiv N$),
$p$ does not divide $|F(K)_{\rm tor}| 
\cdot \prod_{\scriptscriptstyle{{\ell \mid N}}} 
\big( c_\ell(F) 
\cdot c_\ell(E) \big)$,
and the image of the absolute
Galois group of~$\Q$ acting on~$E[p]$ is isomorphic to~${\rm GL}_2(\Z/p\Z)$.
Moreover, Corollary~\ref{cor1} gives the stronger conclusion that
the {\em square} of~$r$ 
divides $|E(\Q)|^2 \cdot \frac{\LAf(1)}{\OAf}$
and the Birch and Swinnerton-Dyer conjectural
order of~$\Sha(E)$ without assuming the parity conjecture.
However, in Corollary~\ref{cor1}, we do have the extra hypothesis
that 
$f$ and~$g$ are not congruent modulo
a prime ideal over~$p$ to any other newforms of level dividing~$N$
(for Fourier coefficients of index coprime to~$Np$). 
This hypothesis is used only via Lemma~\ref{lem:containment2},
and so if it could be removed from that lemma, then it can be
removed from Theorem~\ref{thm:main} and Corollary~\ref{cor1}.
In any case, our proof of Theorem~\ref{thm:main} does not use
the theory of visibility or the theory of Euler systems, 
and is much more elementary than
either theories. In fact, our approach may be considered an alternative
to the theory of Euler systems in the context where the theory of visibility
predicts non-triviality of Shafarevich-Tate groups for analytic rank zero.

\comment{

\begin{cor} \label{cor:main}
Suppose that $p$ is odd,
that $p^2 \nmid N$,  and that $E[p]$ and~$F[p]$ are irreducible.
Assume that $f$ and~$g$ are not congruent modulo
a prime ideal over~$p$ to any other newforms of level dividing~$N$
(for Fourier coefficients of index coprime to~$Np$).
Then $r$ divides $|E(\Q)|^2 \cdot \frac{\LAf(1)}{\OAf}$
and the Birch and Swinnerton-Dyer conjectural
order of~$\Sha(E)$.
\end{cor}
\begin{proof}
By Lemma~\ref{lem:multone}, 
$p$ satisfies hypothesis~(*).
Hence by Theorem~\ref{thm:main},
$r$ divides $|E(\Q)|^2 \cdot \frac{\LAf(1)}{\OAf}$.
By the hypothesis
that $f$ and~$g$ are not congruent modulo
a prime ideal over~$p$ to any other newforms of level dividing~$N$
(for Fourier coefficients of index coprime to~$Np$),
as explained in the proof of Proposition~\ref{prop:agst}(ii),
$p$ does not divide~$c_\ell(E)$ for any prime~$\ell$.
Hence, by~(\ref{bsdformula}), 
$p$ divides the Birch and Swinnerton-Dyer conjectural
order of~$\Sha(E)$. 
\end{proof}

In view of Proposition~\ref{prop:agst},
Corollaries~\ref{cor1} and~\ref{cor:main}
provide theoretical evidence towards 
the Birch and Swinnerton-Dyer conjectural formula~(\ref{bsdformula}).
We remark that Corollary~\ref{cor1} is to be compared to
part~(i) of Proposition~\ref{prop:agst}
and Corollary~\ref{cor:main} to part~(ii) of Proposition~\ref{prop:agst}.

}

\section{Proof of Theorem~\ref{thm:main}} \label{sec:proofs}

We work in slightly more generality in the beginning
and assume that $f$ and~$g$ are any newforms 
(whose Fourier coefficients need not be integers),
with $f$ having analytic rank zero and $g$ having
analytic rank greater than zero. 
Thus the associated newform quotients~$A_f$ and~$A_g$ need not be elliptic
curves, but we will still denote them by~$E$ and~$F$ (respectively)
for simplicity of notation.

Recall that $I_g = {\rm Ann}_{\scriptscriptstyle \T} g$. Let
$J' = J/(I_f \cap I_g)J$ and let $\pi''$ denote the quotient map 
$J \ra J'$. Then the quotient
map $J \stackrel{\pi}{\ra} E$ factors through~$J'$; let
$\pi'$ denote the map $J' \ra E$ in
this factorization. Let $F'$ denote the kernel of~$\pi'$. 
Let $E'$ denote the image of~$\Edual \subseteq J$ in~$J'$ under
the  quotient map~\mbox{$\pi'': J \ra J'$}.
Let $B$ denote the kernel of the projection map~$\pi: J \ra E$; it is the
abelian subvariety~$I_f J$ of~$J$.
We have the following diagram, in which the two sequences of four arrows are 
exact (one horizontal and one upwards diagonal):

$$\xymatrix{
 & \Fdual \ar@{^(->}[rd] & \quad \Edual \ar@{^(->}[d] \ar[rd]^{\sim} & & 0 \\
0 \ar[r] & B \ar[dd] \ar[r] & J \ar[r]^{\pi} \ar[d]_{\pi''} & E \ar[r] 
\ar[ur] & 0\\
& & J'\ar[rd] \ar[ur]_{\pi'} &  & \\
& F' \ar[ur] & & F  & \\
0 \ar[ur] & & & & \\
}$$

Now $F'$ is connected, since it is
a quotient of~$B$ (as a simple diagram chase above shows) and $B$ is connected. 
Thus, by looking at dimensions, one sees that $F'$ is the image of~$\Fdual$
under~$\pi''$. Since the composite $\Fdual \hookrightarrow J \ra J' \ra F$
is an isogeny, 
the quotient map $J' \ra F$ induces an isogeny
$\pi''(\Fdual) \sim F$, and hence 
an isogeny $F' \sim F$. 
Let $E'$ denote $\pi''(\Edual)$. Since $\pi$ induces an isogeny
from~$\Edual$ to~$E$, we see that $\pi'$ also induces an isogeny
from~$E'$ to~$E$. 

Let $\Im$ denote the annihilator, under the action of~$\T$,
of the divisor $(0) - (\infty)$,
considered as an element of~$J_0(N)(\C)$. 
We have an isomorphism
$$H_1(X_0(N),{\Z}) \tensor {\R} \stackrel{\cong}{\lra}
{\rm Hom}_{{\C}} (H^0(X_0(N), \Omega^1),{\C}),$$ obtained
by integrating differentials along cycles (see~\cite[\S~IV.1]{lang:modular}).
Let $e$ be the element of~$H_1(X_0(N),{\Z}) \tensor {\R}$ that corresponds
to the map $\omega \mapsto - \int_{\{0,i\infty\}} \omega$ under this
isomorphism. It is called the {\em winding element}.
By~\cite[II.18.6]{mazur:eisenstein}, we have $\Im e \subseteq H_1(X_0(N),\C)
= H_1(J_0(N),\C)$
(note that in loc. cit., the definition of $\Im$ is different and $N$ 
is assumed to be prime; but the only essential property of $\Im$ that
is used in the proof is that $\Im$ annihilates the divisor $(0) - (\infty)$,
and the assumption that $N$ is prime is not used). 
If $\phi$ is a map of abelian varieties over~$\Q$, then we denote
the induced map on the first homology groups by~$\phi_*$.

\begin{lem} \label{lem:Eiscont}
$\pi''_*(\Im e) \subseteq H_1(E', \Z)$.
\end{lem}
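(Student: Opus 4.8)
The plan is to track the winding element $e$ and its image under $\pi''_*$ explicitly at the level of $p$-adic (or rather, integral away from the bad primes) homology, and to identify $H_1(E',\Z)$ inside $H_1(J',\Z)$ via the isogeny $\pi'|_{E'}\colon E'\to E$. First I would recall that $\Im$ annihilates the divisor $(0)-(\infty)$ in $J_0(N)(\C)$, and that $\Im e\subseteq H_1(X_0(N),\C)=H_1(J,\C)$, so $\pi''_*(\Im e)$ is a well-defined $\T$-submodule of $H_1(J',\C)$ landing in the $\T$-stable sublattice coming from $H_1(J',\Z)$ — the point to be proven is that it lands in the smaller sublattice $H_1(E',\Z)$. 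The natural strategy is: $E'=\pi''(\Edual)$, and $\Edual\subseteq J$ is the image of the dual of the quotient $J\to E$, so $H_1(E',\Z)$ (up to finite index controlled by the isogeny, hence by the primes we are told to avoid) is $I_f\cdot H_1(J,\Z)$ pushed to $J'$; equivalently, $H_1(E',\Q)$ is the $I_f$-part (the $f$-isotypic piece dual to the $E$-quotient). So the claim reduces to showing that $\pi''_*(\Im e)$ lies in the $I_f$-isotypic component of $H_1(J',\cdot)$ up to saturating at the primes being excluded.

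The key input is the analytic rank zero hypothesis on $f$: since $L(f,1)\neq 0$, the image of the winding element $e$ in $H_1(E,\R)$ (i.e.\ $\pi_*(e)$) is nonzero, and more precisely, the $f$-isotypic projection of $e$ is a nonzero rational multiple of a generator of $H_1(E,\Q)^+$ — this is the usual computation relating $L(f,1)/\Omega_E$ to the winding element, as in Mazur's work and its refinements. Meanwhile, because $\Im$ annihilates $(0)-(\infty)$, the element $\Im e$ is ``small'': I would argue that $\Im e$, viewed in $H_1(J,\C)$, actually lies in the $\Q$-span of the $f$-new part together with Eisenstein-type contributions, and after applying $\pi''$ the Eisenstein part dies (or is controlled by rational torsion of $J/\Fdual$ and the $c_\ell$'s, the quantities excluded in the ambient hypotheses). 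Concretely: the composite map sends $\Im e$ into $H_1(J',\Z)$, and one checks that its image is killed by $I_g$ modulo the allowed denominators, forcing it into the $E'$-part since $J'$ is (up to isogeny) $E'\times F'$ with $F'\sim F=A_g$ and $E'\sim E=A_f$, and $\Im$ together with the annihilator of the divisor isolates the $f$-part.

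The step I expect to be the main obstacle is controlling the denominators: one needs that passing from $\Edual\subseteq J$ to its image $E'$ in $J'$, and from $H_1(E,\Z)$ back through the isogeny $E'\to E$, does not introduce primes other than those already excluded (the primes dividing $N$, the torsion orders, the component group orders) — this is precisely the role of hypothesis~(*) and multiplicity one, which guarantee that the relevant $p$-divisible torsion is as small as possible, and it is where Lemma~\ref{lem:multone} and the multiplicity-one results get used. I would handle this by working prime-by-prime: for a prime $\lambda$ over $p$ not dividing the excluded quantities, multiplicity one forces the $\lambda$-torsion $J[\m]$ to be two-dimensional, which pins down the $\lambda$-adic homology of $E'$ inside that of $J'$ exactly, and then the containment $\pi''_*(\Im e)\subseteq H_1(E',\Z_\lambda)$ follows from the fact that $\Im e$ is $I_f$-isotypic modulo $\m$. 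Assembling these local statements and checking that $\Im e$ genuinely has the claimed integrality (i.e.\ $\Im e\subseteq H_1(J,\Z)$ and not merely $H_1(J,\C)$, up to the allowed primes) via the explicit description of $\Im$ and Mazur's bound is the technical heart; the rest is a diagram chase in the displayed diagram, using that $F'$ is connected and that $\pi'$ restricts to an isogeny on $E'$.
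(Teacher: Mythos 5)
There is a genuine gap: your outline never uses the one fact this lemma actually turns on, namely that $g$ (not $f$) has positive analytic rank, so that $L(g^\sigma,1)=0$ for \emph{every} Galois conjugate $g^\sigma$. The paper's proof is a pairing computation: a differential $\omega$ over~$\Q$ on~$F'$ pulls back to $2\pi i\,h(z)dz$ with $h\in S_2(\Gamma_0(N),\Q)[I_g]$, and for $t\in\Im$ one has $\int_{t\pi''_*(e)}\omega=\int_e 2\pi i\,(th)(z)dz$, which is a $\Q$-linear combination of the values $L(g^\sigma,1)$ and hence vanishes; so the $F'$-component of $\pi''_*(\Im e)$ in $H_1(J',\Q)\cong H_1(E',\Q)\oplus H_1(F',\Q)$ is zero, and Mazur's integrality $\Im e\subseteq H_1(X_0(N),\Z)$ together with the saturation of $H_1(E',\Z)$ in $H_1(J',\Z)$ gives the stated containment. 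Your proposal instead declares the ``key input'' to be $L(f,1)\neq 0$, which plays no role here, and rests the containment on the claim that $\Im e$ is ``$I_f$-isotypic'' (elsewhere, that its image is ``killed by $I_g$,'' which would put it in the $F'$-part, not the $E'$-part). Neither claim is justified, and the first is false: $e$, and hence $\Im e$, has nontrivial components in essentially every isotypic piece of $H_1(J,\Q)$; the image in~$J'$ has trivial $F'$-component only because the $g$-isotypic component of the winding element is orthogonal to all $g$-isotypic holomorphic differentials, i.e.\ precisely because $L(g^\sigma,1)=0$ (positive analytic rank of~$g$, Galois-stably). Multiplicity one, hypothesis~(*), and an argument ``modulo $\m$'' cannot supply this analytic input.

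A second, structural problem: the lemma is an exact, unconditional containment of lattices, with no hypotheses and no excluded primes, and it is used as such in Proposition~\ref{prop:fact}. Your plan only aims at a containment ``up to allowed denominators'' at primes away from $N$, torsion orders, and component groups, and offers no mechanism at the excluded primes; that would not prove the statement. You do correctly flag Mazur's bound as the source of integrality of $\Im e$, but the other ingredient is missing. Finally, your identification of $H_1(E',\cdot)$ with ``$I_f\cdot H_1(J,\Z)$ pushed to $J'$'' reverses the roles of sub and quotient: $\Edual\subseteq J$ corresponds to the part of $H_1(J,\Q)$ \emph{annihilated} by~$I_f$, whereas $I_f H_1(J,\Q)$ corresponds to the kernel $B=I_fJ$ of~$\pi$; this same reversal is what produces the ``killed by $I_g$ forces it into the $E'$-part'' slip. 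In short, the proof needs (i) $\Im e\subseteq H_1(X_0(N),\Z)$ and (ii) the vanishing of $L(g^\sigma,1)$ as in~\cite{gross-zagier}; your outline gestures at (i) but has no correct substitute for (ii).
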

\begin{proof}
Since $J'$ is isogenous to $E' \oplus F'$, we have
$H_1(J', \Z) \tensor \Q \isom H_1(E', \Z) \tensor \Q \oplus H_1(F', \Z) \tensor \Q$.
Viewing $\pi''_*(\Im e)$ as a subset of~$H_1(J', \Z) \tensor \Q$, 
it suffices to show that
$\pi''_*(\Im e) \cap (H_1(F', \Z) \tensor \Q) = 0$.
Suppose $x \in \pi''_*(\Im e) \cap (H_1(F', \Z) \tensor \Q) $;
we need to show that then $x = 0$. For
some integer~$n$, we have $nx \in H_1(F', \Z)$, and for 
some $t \in \Im$, we have $t \pi''_*(e) = nx$. 
Let $\omega$ be a differential over~$\Q$ on~$F'$, 
which we may view as a differential on~$J'$.
Then $\pi''^*(\omega)$, when viewed as a differential on~$X_0(N)$,
is of the form $2 \pi i h(z) dz$ for 
some $h$ in~$S_2(\Gamma_0(N), \Q)[I_g]$.
Thus 
$\int_{nx} \omega = 
\int_{t \pi''_*(e)} \omega = 
\int_{te} 2 \pi i h(z) dz = 
\int_{e} 2 \pi i (t h)(z) dz$.
Now $th \in  S_2(\Gamma_0(N), \Q)[I_g]$, and 
so $th$ is a $\Q$-linear combination of the Galois conjugates of~$g$.
Hence 
$\int_{e} 2 \pi i (t h)(z) dz$ is a 
$\Q$-linear combination of 
of $\int_{e} 2 \pi i g^\sigma(z) dz = L(g^\sigma,1)$ for various conjugates
$g^\sigma$ of~$g$, where
$\sigma \in {\rm Gal}(\Qbar/\Q)$. 
Since $g$ has positive analytic rank, $L(g,1) = 0$, and so
$L(g^\sigma,1) = 0$ for all $\sigma \in {\rm Gal}(\Qbar/\Q)$,
e.g., by~\cite[Cor.~V.1.3]{gross-zagier}.
Thus, by the discussion above, we see that 
$\int_{nx} \omega = 0$ for every differential~$\omega$ over~$\Q$ on~$F'$, and
so $nx = 0$ in~$H_1(F', \Z) \tensor \Q$. Hence $x=0$,
as was to be shown.
\end{proof}

There is a complex conjugation involution acting on~$H_1(X_0(N),\C)$,
and if $G$ is a group on which it induces an involution,
then by $G^+$ we mean the subgroup of elements of~$G$ fixed
by the involution. It is easy to see that $e$ is fixed by the
complex conjugation involution, and so
by Lemma~\ref{lem:Eiscont}, we have  $\pi''_*(\Im e) \subseteq H_1(E', \Z)^+$.
The following is an analog of~\cite[Theorem~3.2]{agmer}:

\begin{prop} \label{prop:fact}
Up to a power of~$2$,
\begin{eqnarray}\label{mainform}
\ce \cdot c_\infty(E)\cdot \frac{\LAf(1)}{\OAf} =
\frac{
\Mid \frac{H_1(J', \Z)^+}{H_1(F', \Z)^+ + H_1(E', \Z)^+} \miD \cdot 
\Mid \frac{H_1(E', \Z)^+  + H_1(F', \Z)^+}{\pi''_*(\Im  e) + H_1(F', \Z)^+} \miD}
{\Mid \pi_*(\T  e) / \pi_*(\Im  e) \miD}.
\end{eqnarray}
\end{prop}
\begin{proof}
By~\cite[Thm.~2.1]{agmer}, we have
\begin{eqnarray} \label{formula:lovero}
\frac{\LAf(1)}{\OAf} = 
\frac{\left[ H_1(A_f,{\Z})^+ : \pi_*(\T e) \right]}
     {\ce \cdot c_\infty(E)} \ , 
\end{eqnarray}
where $[ H_1(A_f,{\Z})^+ : \pi_*(\T e) ]$ denotes
the absolute value of the
determinant of an automorphism of~$H_1(A_f,\Q)$ that
takes the lattice $H_1(A_f,{\Z})^+$ isomorphically onto
the lattice $\pi_*(\T e)$. 
Now $\pi''_*$ and $\pi'_*$ are both surjective, since the
kernels of~$\pi''$ and~$\pi'$ (respectively) are connected.
Thus $H_1(E, \Z) = \pi'_*(H_1(J', \Z))$. Putting this in~(\ref{formula:lovero}),
and considering that $\pi''_* (\Im  e) \subseteq H_1(J', \Z)^+$
(since $\Im  e \subseteq H_1(J_0(N), \C)^+$), we get
\begin{eqnarray} \label{inproof}
\ce \cdot c_\infty(E)\cdot \frac{\LAf(1)}{\OAf}  
= [\pi'_*(H_1(J', \Z))^+ : \pi_*(\T  e)]  
= \frac{\Mid \pi'_*(H_1(J', \Z))^+ /\pi'_*(\pi''_* (\Im  e)) \miD}
{\Mid \pi_*(\T  e) / \pi_*(\Im  e) \miD}.
\end{eqnarray}

The long exact sequence of homology associated to the 
short exact sequence~$0 \ra F' \ra J' \ra E \ra 0$ is:
$$\ldots \ra H_1(F',\Z) \ra H_1(J',\Z)
\stackrel{\pi'_*}{\ra} H_1(E,\Z) \ra 0 \ra \ldots$$
Thus $H_1(F',\Z) \subseteq \ker(\pi'_*)$.\\

\noindent{\it Claim:}
$H_1(F',\Z) = \ker(\pi'_*)$.
\begin{proof}
Since $H_1(F',\Z)$ is saturated in~$H_1(J', \Z)$, it suffices
to show that $H_1(F',\Z) \tensor \Q = \ker(\pi'_*) \tensor \Q$,
i.e., that 
the free abelian groups $H_1(F',\Z)$ and~$\ker(\pi'_*)$ have the same rank.
But
\begin{tabbing}
\= ${\rm rank}(\ker(\pi'_*)) = 2 \cdot \dim J' - 2 \cdot \dim E$ \\
\> $= 2 \cdot \dim_\Q S_2(\Gamma_0(N),\Q)[I_f \cap I_g] 
- 2 \cdot \dim_\Q S_2(\Gamma_0(N),\Q)[I_f]$ \\ 
\> $ = 2 \cdot \dim_\Q S_2(\Gamma_0(N),\Q)[I_g] = 
2 \cdot \dim_\Q F' = {\rm rank}(H_1(F',\Z))$.
\end{tabbing} This proves the claim.
\end{proof}

The kernel of the natural map 
$H_1(J', \Z) \ra \pi'_*(H_1(J', \Z))/\pi'_*(\pi''_* (\Im  e))$
is $\ker(\pi'_*) + \pi''_* (\Im  e) = H_1(F', \Z) + \pi''_* (\Im  e)$,
by the claim above
Thus up to a power of~$2$,
\begin{eqnarray} \label{eqn:oddeq}
\Mid \pi_*(H_1(J', \Z))^+ /\pi'_*(\pi''_*(\Im  e)) \miD =
\lm \frac{H_1(J', \Z)^+}{ H_1(F', \Z)^+ + \pi''_*(\Im  e)} \rmid.
\end{eqnarray}

In view of Lemma~\ref{lem:Eiscont},
\begin{eqnarray} \label{eqn:fact}
\lm \frac{H_1(J', \Z)^+}{H_1(F', \Z)^+ + \pi''_*(\Im  e)} \rmid
= \lm \frac{H_1(J', \Z)^+}{H_1(F', \Z)^+ + H_1(E', \Z)^+} \rmid \cdot 
\lm \frac{H_1(E', \Z)^+  + H_1(F', \Z)^+}{\pi''_*(\Im  e) + H_1(F', \Z)^+} \rmid.
\end{eqnarray}
Putting~(\ref{eqn:fact}) in~(\ref{eqn:oddeq}), and 
then putting the result in~(\ref{inproof}), we get
the formula in the proposition.
\end{proof}

\begin{lem} \label{lem:containment2}
Suppose that $p$ satisfies hypothesis~(*), and assume that
$f$ and~$g$ have integer Fourier coefficients (so 
$\Edual$ and~$\Fdual$ are elliptic curves). 
Assume moreover that $f$ and~$g$ are not congruent modulo
a prime ideal over~$p$ to any other newforms of level dividing~$N$
(for Fourier coefficients of index coprime to~$Np$).
Then $E'[r] = F'[r]$, and both are direct summands
of~$E' \cap F'$ as~${\rm Gal}(\Qbar/\Q)$-modules.
\end{lem}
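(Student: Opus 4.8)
Recall that $E' = \pi''(\Edual)$ and $F' = \pi''(\Fdual)$ inside $J' = J/(I_f \cap I_g)J$, with $\pi''$ inducing isogenies $\Edual \sim E'$, $\Fdual \sim F'$ over~$\Q$. The plan is first to transfer the congruence information from $J_0(N)$ to $J'$, then to show the two $r$-torsion subgroups coincide and split off as Galois-module direct summands.

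\begin{proof}
Since $f$ and $g$ have integer Fourier coefficients, $\Edual$ and $\Fdual$ are elliptic curves, and the congruence $f \equiv g \pmod r$ (with $r$ the exact power of~$p$) gives an isomorphism of Galois modules $\Edual[r] \isom \Fdual[r]$; indeed this follows as in the proof of Proposition~\ref{prop:agst}, using hypothesis~(*) together with \cite[Lemma~2.2]{ag:visrk1} to pass from the congruence to containment at level~$r$, and using that $p$ is odd. The first task is to see that $\pi''$ carries this isomorphism over, i.e.\ that $\pi''$ restricted to $\Edual[r]$ and to $\Fdual[r]$ is injective. This is where the hypothesis that $f$ and~$g$ are not congruent modulo a prime over~$p$ to any other newform of level dividing~$N$ enters: the kernel of $\pi'' \colon J \ra J'$ is the abelian subvariety $(I_f \cap I_g)J$, whose homology (after reduction mod~$p$) is supported on maximal ideals $\m$ coming from newforms other than~$f$ and~$g$ (of level dividing~$N$), together possibly with $\m = (I_f + I_g, p)$; the no-other-congruence hypothesis kills the first possibility, so $\ker(\pi'')[p]$ meets $\Edual[p]$ and $\Fdual[p]$ only inside $\Edual[p] \cap \Fdual[p]$, and hypothesis~(*) (multiplicity one) forces this intersection to be well-behaved. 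Carefully checking this supports the conclusion $\Edual[r] \hookrightarrow E'[r]$ and $\Fdual[r] \hookrightarrow F'[r]$, and comparing orders (both isogenies are $\Q$-isogenies, $r$ prime to their degrees by the torsion hypotheses) gives $E'[r] = \pi''(\Edual[r])$ and $F'[r] = \pi''(\Fdual[r])$, hence $E'[r] = F'[r]$ as submodules of $J'[r]$.

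\textbf{Direct summand.} For the splitting claim: let $W = E'[r] = F'[r]$, viewed inside $(E' \cap F')[r] = E'[r] \cap F'[r]$ (the last equality because $W \subseteq E'$ and $W \subseteq F'$). Since $J'$ is $\Q$-isogenous to $E' \times F'$ and $r$ is coprime to the degree of this isogeny (again by the torsion and conductor hypotheses, exactly as the primes appearing in that degree are controlled by $c_\ell$'s and torsion orders), we have a Galois-equivariant isomorphism $J'[r] \isom E'[r] \oplus F'[r] = W \oplus W$ after identifying both summands with~$W$ via the isogeny. Concretely, the quotient maps $J' \ra E$ and $J' \ra F$ (the latter through $F'$) induce, on $r$-torsion, projections exhibiting $J'[r]$ as the direct sum of the images of $\Edual[r]$ and $\Fdual[r]$. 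One then checks that the addition map $W \oplus W \ra E' \cap F'$ restricted appropriately, or rather the inclusion $W \hookrightarrow (E' \cap F')[r]$, splits because the ambient $J'[r]$ splits: the complement of $W$ inside $(E' \cap F')[r]$ is the intersection of $(E' \cap F')[r]$ with the ``other'' copy of~$W$ in the decomposition $J'[r] = W \oplus W$, and this is a Galois-stable complement. Spelling out this bookkeeping—which copy of $W$ is which, and that the two natural inclusions of $W$ into $J'[r]$ are Galois-equivariantly independent—gives that $W$ is a direct summand of $E' \cap F'$ (meaning of its $r$-torsion, or equivalently, since $r$ is prime to the relevant isogeny degrees, a summand up to the stated identifications) as a ${\rm Gal}(\Qbar/\Q)$-module.

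\textbf{Main obstacle.} The delicate point is the first step: showing $\pi''$ is injective on $\Edual[r]$ and $\Fdual[r]$, equivalently that $\ker(\pi'')$ contributes no $r$-torsion to either curve. This is precisely the place where both hypothesis~(*) and the no-extra-congruence hypothesis are needed, and it must be argued via the structure of $J_0(N)[p]$ as a module over the Hecke algebra localized at maximal ideals of residue characteristic~$p$—using multiplicity one to pin down $J_0(N)[\m]$, and Ribet-type level-lowering/raising results (as invoked already in the proof of Proposition~\ref{prop:agst}) to rule out unwanted $\m$'s in the support of $\ker(\pi'')[p]$. Everything else is linear algebra over $\Z/r\Z$ combined with the isogeny decomposition of~$J'$, where the coprimality of~$r$ to the isogeny degrees (guaranteed by the standing hypotheses on torsion, component groups, and the conductor) makes the decomposition exact on $r$-torsion.
\end{proof}
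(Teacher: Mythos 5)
Your overall strategy is the same as the paper's (transfer the congruence from level $J_0(N)$, then push everything through $\pi''$), but the step on which everything hinges is not actually proved. You need $\pi''$ to be injective on the $p$-power torsion of $\Edual$, of $\Fdual$, and of $\Edual\cap\Fdual$, and at exactly that point you write that a support-of-maximal-ideals argument, ``carefully checked,'' supports the conclusion. The sketch you give does not deliver it: the maximal ideal containing $I_f+I_g$ and $p$ cannot be excluded by the no-other-congruence hypothesis (it is there because $f\equiv g$), and hypothesis~(*) by itself does not show that $\Edual\cap (I_f\cap I_g)J$ has no $p$-torsion. The paper's proof supplies the missing tool: the kernels of $\Edual\to E'$, $\Fdual\to F'$ and $\Edual\cap\Fdual\to E'\cap F'$ are all contained in $J'^\vee\cap (I_f\cap I_g)J$, and by \cite[Thm.~3.6(a)]{ars:moddeg} any prime dividing the order of that intersection divides the congruence exponent of $J'^\vee$, which is coprime to~$p$ precisely because of the hypothesis that $f$ and~$g$ are not congruent modulo a prime over~$p$ to any other newform of level dividing~$N$. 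Note also that what must be transferred from $J$ is not merely an isomorphism $\Edual[r]\isom\Fdual[r]$ but the equality $(\Edual\cap\Fdual)[p^\infty]=\Edual[r]=\Fdual[r]$ inside~$J$ (this is what the proof of \cite[Lemma~2.2]{ag:visrk1} gives); the $p^\infty$ statement is essential for the second assertion of the lemma, and your argument never produces it.

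The direct-summand argument is also incorrect as written. You assert that $r$ is coprime to the degree of the isogeny $E'\times F'\to J'$, but that degree equals $|E'\cap F'|$, and $E'\cap F'$ contains $E'[r]=F'[r]$, a group of order $r^2$: the congruence you are exploiting is exactly what makes this degree divisible by~$r$. Consequently $J'[r]$ does not decompose as $E'[r]\oplus F'[r]$ (these are literally the same subgroup of~$J'$, not two independent copies of~$W$), so there is no ``other copy of $W$'' available as a Galois-stable complement. In the paper the splitting is immediate for a different and much simpler reason: once one knows $(E'\cap F')[p^\infty]=E'[r]=F'[r]$, the group $E'[r]$ is the entire $p$-primary component of the finite Galois module $E'\cap F'$, and its prime-to-$p$ part is a canonical Galois-stable complement. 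Without the $p^\infty$ equality, and with the faulty coprimality claim, your splitting mechanism fails.
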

\begin{proof}
By the proof of~\cite[Lemma~2.2]{ag:visrk1},
$(\Edual \cap \Fdual)[p^\infty] = \Edual[r] = \Fdual[r]$.
The kernels of the surjective maps 
$\Edual \ra E'$, $\Fdual \ra F'$, and
$\Edual \cap \Fdual \ra E' \cap F'$
that are induced by~$\pi''$ are all
contained in~$J'^\vee \cap IJ$. 
By~\cite[Theorem~3.6(a)]{ars:moddeg} with $A = J'^\vee$, 
if a prime~$\ell$ divides the order of~$J'^\vee \cap IJ$,
then $\ell$ divides the congruence exponent of~$J'^\vee$
(with notation as in loc. cit.).
By the hypothesis that 
 $f$ and~$g$ are not congruent modulo
a prime ideal over~$p$ to any other newforms of level dividing~$N$
(for Fourier coefficients of index coprime to~$Np$), 
the congruence exponent of~$J'^\vee$ is coprime to~$p$.
Hence the kernels of the maps mentioned
above have orders coprime to~$p$. Thus the maps 
$\Edual \ra E'$, $\Fdual \ra F'$, and
$\Edual \cap \Fdual \ra E' \cap F'$
are all isomorphisms on~$p^n$ torsion points for any
positive integer~$n$ (this can be seen, e.g.,
by the snake lemma applied to the multiplication by~$p^n$ map
on the corrsponding short exact sequence in each situation). In particular the maps
$(\Edual \cap \Fdual)[p^\infty] \ra (E' \cap F')[p^\infty]$,
$\Edual[r] \ra E'[r]$, and $\Fdual[r] \ra F'[r]$ are isomorphisms.
From this and the very first statement in this proof, we see that
$(E' \cap F')[p^\infty] = E'[r] = F'[r]$. The lemma now follows from
the conclusion of the previous sentence.
\end{proof}

\comment{

The proof is an adaptation of the proof of~\cite[Lemma~2.2]{ag:visrk1}.
Let $\T' = \T / (I_f \cap I_g)$. 
We call a maximal ideal of~$\T'$ {\em nice} if its inverse
image in~$\T$ satisfies multiplicity one. Let $\m'$ be a nice
maximal ideal, and let $\m$ denote its inverse image in~$\T$.
Then $\m$ satisfies multiplicity one,
and so
using Proposition~\ref{lem:good} with $I = I_f \cap I_g$
( $= \Ann_\T (S_{[f]} \oplus S_{[g]})$, where the notation is as in
Section~\ref{sec:app}),
we see that $\m'$ is good for~$J'$,
in the sense of~\cite[p.~437]{emerton:optimal}
(which is recalled in Section~\ref{sec:app}).
Hence, by~\cite[Cor.~2.5]{emerton:optimal},
if $I'$ is a saturated ideal of~$\T'$, then 
the $\m'$-adic completion of the group of connected components of~$J'[I']$
is trivial. 

If $L \ra M$ is a homomorphism
of two $\T'$-modules,  
then we say that $L = M$ {\em away from} a given 
set of maximal ideals of~$\T'$
if the induced map on the $\m'$-adic completions is an isomorphism
for all maximal ideals~$\m'$ that are not in the prescribed set.
Let $I'_f$ and~$I'_g$ denote the images of~$I_f$ and~$I_g$ in~$\T'$.
Then $I'_f$ and~$I'_g$ are both saturated ideals of~$\T'$.
Hence, by the previous paragraph and by a consideration of dimensions,
we see that the inclusions $E' \subseteq J'[I'_f]$
and $F' \subseteq J'[I'_g]$ are equalities
away from maximal ideals of~$\T'$ that are not nice. \\

\noindent{\em Claim:}
The inclusion $E' \cap F' \subseteq J'[I'_f + I'_g]$ is an
equality away from maximal ideals of~$\T'$ that are not nice.
\begin{proof}
Consider the natural map $F' \cap J'[I'_f] \ra J'[I'_f]/E'$. It's kernel 
is $F' \cap J'[I'_f] \cap E' = F' \cap E'$, and hence we have an injection:
\begin{eqnarray} \label{eqn1}
\frac{F' \cap J'[I'_f]}{F' \cap E'} \hra \frac{J'[I'_f]}{E'}.
\end{eqnarray}
Also, the natural map 
$J'[I'_f + I'_g] = J'[I'_g][I'_f] \ra J'[I'_g]/F'$ has kernel
$F' \cap J'[I'_g][I'_f] = F' \cap J'[I'_f]$, and hence
we have an injection
\begin{eqnarray}\label{eqn2}
\frac{J'[I'_f+I'_g]}{F' \cap J'[I'_f]} \hra \frac{J'[I'_g]}{F'}
\end{eqnarray}
The claim follows from equations~(\ref{eqn1}) and~(\ref{eqn2}), considering that
the Hecke modules on the  right sides of the two equations 
are supported on the set of maximal ideals of~$\T'$ that 
are not nice (by the statement just before the claim).
\end{proof}
Let $m$ denote the largest integer
such that $f$ and~$g$ are congruent modulo~$m$.
Then $m$ is the order of the group
$$\frac{S_2(\Gamma_0(N), \Z)[I_f \cap I_g]}{S_2(\Gamma_0(N), \Z)[I_f]+S_2(\Gamma_0(N), \Z)[I_g]} \isom
\frac{\T/(I_f \cap I_g)}{I_f/(I_f \cap I_g) + I_g/(I_f \cap I_g)}
= \frac{\T'}{I'_f + I'_g}\ , $$
where the first isomorphism comes from the perfect pairing
$S_2(\Gamma_0(N), \Z) \times \T \ra \Z$ that takes
$(f,T)$ to the first Fourier coefficient of~$T(f)$
(cf.~\cite[Lemma~3.3]{ars:moddeg}).
On $E'$, $\T'$ acts via~$\T'/I'_f \isom \T/ I_f$, which is isomorphic to~$\Z$
by the map that takes $T_\ell$ to~$a_\ell(f)$ for all primes~$\ell$. 
By the statement just before the previous one, we see that
the image of $\frac{I'_f + I'_g}{I'_f}$ under this map
is a subgroup of index~$m$ in~$\Z$, hence is~$m \Z$.
Thus $E'[I'_f + I'_g] = E'\Big[\frac{I'_f + I'_g}{I'_f}\Big] = E'[m]$.
Now $E' \cap F' \subseteq E'[I'_f + I'_g] 
\subseteq J'[I'_f + I'_g]$, and by the claim above,
we see that the inclusion 
$E' \cap F' \subseteq E'[I'_f + I'_g] = E'[m]$
is an equality away from the maximal ideals of~$\T'$
in the support of~$J'[I'_f + I'_g]$ 
that are not nice.
Similarly 
$E' \cap F' \subseteq F'[I'_f + I'_g] = F'[m]$
is an equality away from the maximal ideals of~$\T'$
in the support of~$J'[I'_f + I'_g]$ 
that are not nice.
Finally, suppose~$\m'$ is a maximal ideal of~$\T'$
in the support of~$J'[I'_f + I'_g]$. Then $\m'$ 
contains $I'_f + I'_g$, and so the inverse image~$\m$ of~$\m'$ in~$\T$
contains $I_f + I_g$. Hence 
by hypotheses (*) on~$p$, $\m$ satisfies multiplicity one, and
so $\m'$ is nice. Thus all the
maximal ideals of~$\T'$
in the support of~$J'[I'_f + I'_g]$ are nice.
From the discussion above, and by
the definition of~$r$, it follows that 
$(E' \cap F')[p^\infty] = E'[r] = F'[r]$.
Thus
$E'[r]$ and~$F'[r]$ are identical and
are direct summands 
of~$E' \cap F'$ as~${\rm Gal}(\Qbar/\Q)$-modules.
\end{proof}

}

\begin{proof}[Proof of Theorem~\ref{thm:main}]
Note that since $F$ has positive Mordell-Weil rank,
$g$ has positive analytic rank (by~\cite{kollog:finiteness}),
and so the discussion of this section applies.
By Proposition~\ref{prop:fact}, we see
that up to a power of~$2$,
\begin{eqnarray} \label{eqn:final}
& &|E(\Q)|^2 \cdot \frac{\LAf(1)}{\OAf}   \\
& =  &\frac{
\Mid \frac{H_1(J', \Z)^+}{H_1(F', \Z)^+ + H_1(E', \Z)^+} \miD \cdot 
\Mid \frac{H_1(E', \Z)^+  + H_1(F', \Z)^+}{\pi''_*(\Im  e) + H_1(F', \Z)^+} \miD}
{\ce \cdot c_\infty(E)} \cdot
\frac{|E(\Q)|}{\Mid \pi_*(\T  e) / \pi_*(\Im  e) \miD}
\cdot |E(\Q)|. \nonumber 
\end{eqnarray}
By Lemma~\ref{lem:containment2}, we see that
$r^2$ divides $|E' \cap F'|$. 
By~\cite[Lemma~4.1]{agmer}, we have
$\Mid \frac{H_1(J', \Z)}{H_1(F', \Z) + H_1(E', \Z)} \miD  = | E' \cap F'| \ $.
Hence $r^2$ divides the term
$\Mid \frac{H_1(J', \Z)^+}{H_1(F', \Z)^+ + H_1(E', \Z)^+} \miD$
on the right side of~(\ref{eqn:final})
(considering that $\Mid \frac{H_1(J', \Z)^+}{H_1(F', \Z)^+ + H_1(E', \Z)^+} \miD$
differs from $\Mid \frac{H_1(J', \Z)}{H_1(F', \Z) + H_1(E', \Z)} \miD$ by
a power of~$2$ and that  $r$ is odd).
The theorem now follows from equation~(\ref{eqn:final}),
in view of the facts that
${\Mid \pi_*(\T  e) / \pi_*(\Im  e) \miD}$ divides
$|E(\Q)|$ (by~\cite[Lemma~3.3]{agmer}), $\ce$ is coprime to~$p$ if $p^2 \ndiv N$
(by~\cite[Cor.~4.1]{mazur:rational}), and $c_\infty(E)$ is a power of~$2$,
hence coprime to~$r$.
\end{proof}

\begin{rmk} \label{rmk:correction}
We would like to take the chance to make some corrections to our
earlier paper~\cite{ag:visrk1}. 
First, the statement of the first part of Proposition~3.1 of loc. cit. should
read: \\
Suppose that 
$p$ is coprime to 
$$N \cdot |(J_0(N)/\Fdual)(K)_{\rm tor}|
\cdot |F(K)_{\rm tor}| 
\cdot \prod_{\scriptscriptstyle{{\ell \mid N}}} \big( c_\ell(F)
\cdot c_\ell(E) \big).$$
Then $r$ 
divides ${\Mid \Sha(E/K) \miD}$.

The proof of the statement above is identical to the proof of Part~(i)
of Proposition~\ref{prop:agst} in this article, with $\Q$ replaced by~$K$.
As a result of this correction, the statement of Proposition~4.1 
of~\cite{ag:visrk1}
should change to:\\
Suppose that 
$p$ is coprime to 
$$2 \cdot N \cdot |(J_0(N)/\Fdual)(K)_{\rm tor}|
\cdot |F(K)_{\rm tor}| 
\cdot \prod_{\scriptscriptstyle{{\ell \mid N}}} \big( c_\ell(F) 
\cdot c_\ell(E) \big).$$
Assume that the image of the absolute
Galois group of~$\Q$ acting on~$E[p]$ is isomorphic to~${\rm GL}_2(\Z/p\Z)$.
Then $r$ divides $|E(K)/ \Z \pi(P)|^2$.

This claim follows from the corrected version of 
Proposition~3.1 mentioned above, and from the paragraph just
after the statement of Theorem 1.1 in~\cite{jetchev-global}.

Finally, in the fourth paragraph of
Section~5 of loc. cit., we claimed that ``since $\Edual[r] = \Fdual[r]$
and both are 
direct summands 
of~$\Edual \cap \Fdual$ as~${\rm Gal}(\Qbar/\Q)$-modules,
on applying~$\pi''$ we find that 
$E'[r] = F'[r]$ and both are
direct summands 
of~$E' \cap F'$ as~${\rm Gal}(\Qbar/\Q)$-modules''. Since it may
not be true that $\pi''(\Edual[r]) = E'[r]$ or
$\pi''(\Fdual[r]) = F'[r]$, our claim was not justified.
The claim does 
hold however, by
Lemma~\ref{lem:containment2}, under the extra hypothesis
that 
$f$ and~$g$ are not congruent modulo
a prime ideal over~$p$ to any other newforms of level dividing~$N$
for Fourier coefficients of index coprime to~$Np$
(note that in the proof of Lemma~\ref{lem:containment2}, 
the analytic or
Mordell-Weil ranks of~$f$
and~$g$ do not  play any role).
As a result, the statements of Theorem~4.4, Corollary~4.5, and Corollary~4.6
of~\cite{ag:visrk1} need
the extra hypothesis mentioned in the previous sentence
to be sure that they are valid.
\end{rmk}

\comment{

\section{Appendix}\label{sec:app}

In this section, we prove Proposition~\ref{lem:good} below, which 
is used in the proof of Lemma~\ref{lem:containment2}. The results
of this section may be of independent interest.

If $G$ is an abelian variety over~$\Q$ and $R$ is
a finite flat commutative $\Z$-algebra that acts on~$G$, then
following~\cite{emerton:optimal}, we say that a maximal ideal~$\m$
of~$R$ is {\em good} for~$G$ if $G_\m^\vee$
is a free $R_\m$ module, where $G_\m$ is the~$\m$-divisible group of~$G$,
the superscript~$\vee$ always denotes the Pontryagin dual, and
$R_\m$ is the completion of~$R$ at~$\m$.
The significance of the notion of being good is that if
$I$ is a saturated ideal of~$R$, then by~\cite[Cor.~2.5]{emerton:optimal},
the component group of $G[I]$ is supported at maximal ideals
of~$R$ that are not good. In the situation above, we will denote
by~$T_\m(G)$ the usual $\m$-adic Tate module of~$G$.

Recall that $N$ is  a positive integer,
$J_0(N)$ denotes the Jacobian of the modular curve~$X_0(N)$, 
and $\T$ denotes the Hecke algebra, which is 
the subring of endomorphisms of~$J_0(N)$
generated by the Hecke operators (usually denoted~$T_\ell$
for $\ell \ndiv N$ and $U_p$ for $p\divs N$). 
Recall that that a maximal ideal~$\m$ of~$\T$ is said to
satisfy {\em multiplicity one} if
$J_0(N)[\m]$ is two dimensional over~$\T/\m$.

In this paragraph, the symbol~$g$ stands
for a newform of some level~$N_g$ dividing~$N$. 
Let $S'_g$ denote the subspace of~$S_2(\Gamma_0(N),\C)$
spanned by the forms $g(dz)$ where $d$ ranges over the
divisors of~$N/N_g$. 
Let $[g]$ denote the Galois orbit of~$g$, 
and let $S_{[g]}$ denote the $\Q$-subspace of
forms in~$\oplus_{h \in [g]} S'_h$ with rational Fourier
coefficients. We have 
$S_2(\Gamma_0(N),\Q) = \oplus_{[g]} S_{[g]}$, where
the sum is over Galois conjugacy classes of newforms of
some level dividing~$N$.
Let $X$ be a subset of the set of Galois conjugacy classes of newforms of
some level dividing~$N$, and let 
$I = \Ann_\T (\oplus_{[g] \in X} S_{[g]})$. 


\begin{prop} \label{lem:good}
Let $\m$ be a maximal ideal of~$\T$ that 
satisfies multiplicity one and contains~$I$,
and let $\m'$ denote the image of~$\m$ in~$\T / I$. Then $\m'$
is good for~$J/IJ$.
\end{prop}

Before giving the proof, we state two lemmas that will be used in the proof.
If $g$ is a newform of some level dividing~$N$,
then $S_{[g]}$ is preserved by~$\T$; let
$\T_{[g]}$ denote the image of~$\T$ acting on~$S_{[g]}$.
Then the natural map 
$$ \phi: \T \tensor \Q \ra \oplus_{[g]} \T_{[g]}$$
is an isomorphism of $\T \tensor \Q$ algebras, where $[g]$ ranges over all Galois conjugacy
classes of newforms of level dividing~$N$ (see, e.g., \cite[Thm.~3.5]{parent}). 
Let $\Ihat$ denote $\Ann_{\T}(I)$.

\begin{lem}\label{lem:decomp}
(i) The image of $I \tensor \Q$
under $\phi$ is $\oplus_{[g] \not\in X} \T_{[g]}$,
and the image of~$\Ihat \tensor \Q$ is 
$\oplus_{[g] \in X} \T_{[g]}$. Thus
$\T \tensor \Q \isom I \tensor \Q \oplus \Ihat \tensor \Q$
as $\T \tensor \Q$-modules. \\
(ii) As $\T \tensor \Q$-modules, 
$S_2(\Gamma_0(N), \Q) \isom I S_2(\Gamma_0(N), \Q) \oplus \Ihat S_2(\Gamma_0(N), \Q)$. 
Also,
$\Ihat S_2(\Gamma_0(N), \Q) = S_2(\Gamma_0(N), \Q)[I]$.
\end{lem}
\begin{proof}
We have the decomposition
\begin{eqnarray}\label{eqn:decomp}
\oplus_{[g]} \T_{[g]} = \big(\oplus_{[g] \in X} \T_{[g]} \big)\  
\oplus\ \big(\oplus_{[g] \not\in X} \T_{[g]}\big) \ .
\end{eqnarray}
It is clear that the image of~$I \tensor \Q$
under $\phi$ is $\oplus_{[g] \not\in X} \T_{[g]}$.
As for the image of~$\Ihat \tensor \Q$, it clearly
contains $\oplus_{[g] \in X} \T_{[g]}$. Conversely, if 
$x \in \Ihat \tensor \Q$,  then it annihilates the element
$(0,1)$ in the decomposition of~(\ref{eqn:decomp}) (since
$(0,1)$ is
in the image of~$I \tensor \Q$ under~$\phi$), so the image of $x \cdot (0,1)$
in $\oplus_{[g] } \T_{[g]}$ must
be zero. Thus 
$x \in \oplus_{[g] \in X} \T_{[g]}$, which
finishes the proof of Part~(i) of the lemma.
For Part~(ii), 
since $S_2(\Gamma_0(N), \Q)$ is free of rank one as a~$\T \tensor \Q$ module,
we get the decomposition
$S_2(\Gamma_0(N), \Q) \isom I S_2(\Gamma_0(N), \Q) \oplus \Ihat S_2(\Gamma_0(N), \Q)$. 
For the second statement of Part~(ii), note that clearly
$\Ihat S_2(\Gamma_0(N), \Q) \subseteq S_2(\Gamma_0(N), \Q)[I]$.
Conversely, if $h \in S_2(\Gamma_0(N), \Q)[I]$, then
as $(0,1) \in \phi(I \tensor \Q)$, we have 
$h = (1,0) h + (0,1) h = (1,0) h \in \Ihat S_2(\Gamma_0(N), \Q)$, 
which shows that $S_2(\Gamma_0(N), \Q)[I]
\subseteq \Ihat S_2(\Gamma_0(N), \Q)$.
\end{proof}

There is a perfect pairing 
\begin{eqnarray}\label{perfpair3}
\T \times S_2(\Gamma_0(N),\Z) \ra \Z
\end{eqnarray}
which associates to~$(T,f)$ the first Fourier coefficient~$a_1(f \divs T)$ of 
the modular form~$f \divs T$ (see, e.g., \cite[(2.2)]{ribet:modp});
this induces a pairing 
$$\psi: (\T/I) \tensor \Q  \times S_2(\Gamma_0(N), \Q)[I] \ra \Z.$$
of $(\T/I) \tensor \Q$ modules. 
Note that $\T/I$ is torsion-free, i.e., $I$ is saturated in~$\T$.

\begin{lem} \label{lem:pairing}
$\psi$ is a perfect pairing.
\end{lem}
\begin{proof}
It suffices to show that the induced maps
$S_2(\Gamma_0(N), \Q)[I] \ra \Hom(\T \tensor \Q/I \tensor \Q, \Q)$ and 
$\T \tensor \Q/I \tensor \Q \ra \Hom(S_2(\Gamma_0(N), \Q)[I], \Q)$
are injective. The injectivity of the first map follows
from the perfectness of the pairing~(\ref{perfpair3}).
Suppose the image of~$T \in \T$ in~$(\T/I) \tensor \Q$ is in the kernel of the
map $\T \tensor \Q/I \tensor \Q \ra \Hom(S_2(\Gamma_0(N), \Q)[I], \Q)$.
Then if $h \in S_2(\Gamma_0(N), \Q)[I]$, we have $a_1(h \divs T) = 0$.
But then $a_n(h \divs T) = a_1((h\divs T) \divs T_n) =
a_1((h\divs T_n) \divs T) =  0$ for all $n$
(considering that $h \divs T_n \in S_2(\Gamma_0(N), \Q)[I]$),
and hence $h \divs T = 0$ for all $h \in S_2(\Gamma_0(N), \Q)[I]$. 
Hence $T$ annihilates $S_2(\Gamma_0(N), \Q)[I] \isom \Ihat \tensor \Q$ 
(as $\T \tensor \Q$-modules). 
From the decomposition
$\T \tensor \Q \isom I \tensor \Q \oplus \Ihat \tensor \Q$,
we see that then $T \in I$ (keeping in mind that $I$ is saturated
in~$T$), 
which proves the injectivity.
\end{proof}

\begin{proof}[Proof of Proposition~\ref{lem:good}]
Let $J = J_0(N)$ and let $J' = J/ IJ$.
Since $\m$ satisfies multiplicity one, by a standard argument,
$T_\m(J)$ is free of rank two over~$\T_\m$
(e.g., see~\cite[p.~333]{tilouine:gorenstein}).
The complex analytic description of~$J(\C)$ 
is Hecke equivariant, and from it we see
that $T_\m(J) \isom H_1(J, \Z)_\m$ as Hecke modules.
Also, the complex analytic description of~$J'(\C)$
fits into the following commutative diagram:
$$\xymatrix{
J(\C) \ar[r] & J'(\C)\\
\frac{H^0(J,\Omega_{J/\C})^\vee}{H_1(J, \Z)} \ar[u] \ar[r]
& \frac{H^0(J',\Omega_{J'/\C})^\vee}{H_1(J', \Z)} \ar[u], 
}$$
and hence
is also seen to be Hecke equivariant.
Thus $T_{\m'}(J') \isom H_1(J', \Z)_{\m'}$ as $(\T/I)_{\m'}$ modules.  \\

\noindent {\it Claim:}
$H_1(J', \Z)_{\m'} = H_1(J', \Z)_\m$ is free 
over $(\T/I)_{\m'} \isom \T_\m/I$.
\begin{proof}
Since the kernel~$IJ$  of the map $J \ra J'$ is connected,
we see that $H_1(J', \Z) \isom H_1(J, \Z)/H_1(IJ, \Z)$. 
Since $\T_\m/ I$ is torsion-free, 
it is a flat $\Z$-module, and so we see that 
$H_1(J', \Z)_\m \isom H_1(J, \Z)_\m/H_1(IJ, \Z)_\m$. 
Now $I H_1(J, \Z) \subseteq H_1(IJ, \Z)$, so we have an induced map
$$\phi: H_1(J, \Z)_\m/ I H_1(J, \Z)_\m \ra H_1(J, \Z)_\m/ H_1(IJ, \Z)_\m.$$
We will show that $\phi$ is an isomorphism; assuming this,
$H_1(J', \Z)_\m \isom 
H_1(J, \Z)_\m/ I H_1(J, \Z)_\m \isom 
T_\m(J)/ I T_\m(J) \isom
\T_\m^2/ I \T_\m^2 \isom (\T_\m / I)^2$,
which would prove our claim.

It thus remains to show that $\phi$ is an isomorphism.
Now $\phi$ is clearly surjective, so it suffices to show
that $\phi$ is injective.
Now $\T_\m$ is finite as a~$\Z_p$-module, 
where $p$ is the characteristic of~$\T/\m$.
Hence $\T_\m \tensor \Q$ is a finite dimensional vector space over~$\Q_p$.
Thus $(H_1(J, \Z)_\m/ I H_1(J, \Z)_\m) \tensor \Q$ has dimension 
twice the dimension of 
$(\T_\m/I) \tensor \Q$ over~$\Q_p$.
Now 
$H_1(J, \Z)/ H_1(IJ, \Z)\isom H_1(J', \Z)$ is torsion free. 
Also, 
${\rm rank}(H_1(J, \Z)/ H_1(IJ, \Z)) = 2 \cdot \dim J - 2 \cdot \dim(IJ)
= 2 \cdot \dim (J/IJ) = 2 \cdot \dim_\Q S_2(\Gamma_0(N), \Q)[I]
= 2 \cdot \dim_\Q (\T/I \tensor \Q)$, where
the last equality follows by Lemma~\ref{lem:pairing}.
Thus $H_1(J, \Z)/ H_1(IJ, \Z)$ is free of rank two over~$\T/I$
(recall that $\T/I$ is torsion-free) 
%
and so $(H_1(J, \Z)_\m/ H_1(IJ, \Z)_\m) \tensor \Q$ also has dimension 
twice the dimension of 
$(\T_\m/I) \tensor \Q$ over~$\Q_p$.
So $\phi \tensor \Q$ is a surjective map of
vector spaces of the same dimension over~$\Q_p$, and hence is
injective.
Thus ${\rm ker}(\phi) \tensor \Q = {\rm ker}(\phi \tensor \Q)$ is trivial. 
But the domain of~$\phi$ is
$H_1(J, \Z)_\m/ I H_1(J, \Z)_\m \isom (\T_\m / I)^2$,
which is torsion free (since $\T/I$ is torsion free). 
Thus ${\rm ker}(\phi)$ is also torsion-free, and hence
is also trivial. Thus $\phi$ is injective, 
as was left to be shown.
\end{proof}

Let $\T' = \T/I$.
By the claim above,
$T_{\m'}(J')$ is free of rank two over~$\T'_{\m'}$. Then
$J'[\m']$ is free of rank two over~$\T'/\m'$.
Suppose $(J'_{\m'})^\vee \tensor \Q$ 
were free of rank two over~$\T'_{\m'} \tensor \Q$.
Then by a standard argument that uses Nakayama's lemma,
$(J'_{\m'})^\vee$ would be free of rank two over~$\T'_{\m'}$
(e.g., see~\cite[p.~341]{tilouine:gorenstein}), and thus
prove the lemma. So it suffices to prove that
$(J'_{\m'})^\vee \tensor \Q$ is free of rank two over~$\T'_{\m'} \tensor \Q$.

\comment{
The image of $I \tensor \Q$
under this map is easily seen to be $\oplus_{[g] \not\in X} \T_{[g]}$,
and the image of~$\Ihat \tensor \Q$ is then seen to be
$\oplus_{[g] \in X} \T_{[g]}$. Thus, 
$\T \tensor \Q \isom I \tensor \Q \oplus \Ihat \tensor \Q$. 
Also, $S_2(\Gamma_0(N), \Q)$ is free of rank one as a~$\T \tensor \Q$ module.

Hence as
$(\T/I) \tensor \Q$-modules, 
$S_2(\Gamma_0(N), \Q) \isom I S_2(\Gamma_0(N), \Q) \oplus \Ihat S_2(\Gamma_0(N), \Q)$. 

Thus we see that $H^1(J', \R)$ is isomorphic
to $(\T \tensor \C)[I] = \Ann_{\T \tensor \C}(I)$
as a $(\T/I) \tensor \R$-module. 
If $g$ is a newform of some level dividing~$N$,
then $S_{[g]}$ is preserved by~$\T$; let
$\T_{[g]}$ denote the image of~$\T$ acting on~$S_{[g]}$.
Then the natural map $\T \tensor \Q \ra \oplus_{[g]} \T_{[g]}$
is an isomorphism, where $[g]$ ranges over all Galois conjugacy
classes of newforms of level dividing~$N$. The image of $I \tensor \Q$
under this map is easily seen to be $\oplus_{[g] \not\in T} \T_{[g]}$,
and the image of~$\Ann_{\T \tensor \Q}(I)$ is then seen to be
$\oplus_{[g] \in T} \T_{[g]}$. Thus, on tensoring with~$\C$, we see that
$\T \tensor \C \isom I \tensor \C \oplus \Ann_{\T \tensor \C}(I)$.

From the conclusions of the two previous paragraphs, we see
that 
}
We now follow part of the proof of Theorem~A on page~449
of~\cite{emerton:optimal}. As mentioned 
there, the $\T$-module $\Jmd$ is isomorphic to 
the $\m'$-adic component of the cohomology space~$H^1(J', \Q_p)$, and so it suffices
to show that $H^1(J', \Q_p)$
is free of rank two over~$(\T/I) \tensor \Q_p$.
For this, we may replace $\Q_p$ by~$\R$ (or any other field
of characteristic zero).
As described on page~448-449
of~\cite{emerton:optimal}, there is a natural isomorphism
of $\T \tensor \R$-modules between
$S_2(\Gamma_0(N), \C)$ and $H_1(J, \R)$, which in turn gives 
a natural isomorphism
of $(\T/I) \tensor \R$-modules between
$S_2(\Gamma_0(N), \C)/I S_2(\Gamma_0(N), \C)$ 
and $H_1(J, \R)/I H_1(J, \R)
= H_1(J', \R)$. Thus, as $(\T/I) \tensor \R$-modules, 
$H^1(J', \R)$ is dual to $S_2(\Gamma_0(N), \C)/I S_2(\Gamma_0(N), \C)$.
By Lemma~\ref{lem:decomp}(ii), as
$(\T/I) \tensor \Q$-modules, 
$S_2(\Gamma_0(N), \Q) \isom I S_2(\Gamma_0(N), \Q) \oplus S_2(\Gamma_0(N), \Q)[I]$. 
Hence,
as $(\T/I) \tensor \R$-modules, 
$H^1(J', \R)$ is dual to $S_2(\Gamma_0(N), \C)[I]$.
Then by Lemma~\ref{lem:pairing}, 
we see that
$(\T/I) \tensor \R$-modules, 
$H^1(J', \R) \isom (\T/I) \tensor \C$, which
is of rank two over $(\T/I) \tensor \R$, and hence so is~$H^1(J', \R)$,
as was left to be shown. 
\end{proof}

}

\bibliographystyle{amsalpha}         


\begin{thebibliography}{ARS06}

\bibitem[Aga]{agmer}
A.~Agashe, \emph{A visible factor of the special {L}-value}, to appear in J.
  Reine Angew. Math. (Crelle's journal), available at arXiv:0810.2477 or \hfill
  \\ {\tt http://www.math.fsu.edu/\~{ }agashe/math.html}.

\bibitem[Aga09]{ag:visrk1}
\bysame, \emph{Visibility and the {B}irch and {S}winnerton-dyer conjecture for
  analytic rank one}, Int. Math. Res. Not. (2009), doi: 10.1093/imrn/rnp036
  (electronic; print version to appear); available at arXiv:0810.2487 or \hfill
  \\ {\tt http://www.math.fsu.edu/\~{ }agashe/math.html}.

\bibitem[ARS06]{ars:moddeg}
A.~Agashe, K.~Ribet, and W.\thinspace{}A. Stein, \emph{{T}he modular degree,
  congruence primes, and multiplicity one}, to appear in a special volume in
  honor of Serge Lang (2006), available at \hfill \\ {\sf
  http://www.math.fsu.edu/\~{ }agashe/math.html}.

\bibitem[AS02]{agst:vis}
A.~Agashe and W.\thinspace{}A. Stein, \emph{Visibility of {S}hafarevich-{T}ate
  groups of abelian varieties}, J. Number Theory \textbf{97} (2002), no.~1,
  171--185.

\bibitem[AS05]{agst:bsd}
\bysame, \emph{{V}isible evidence for the {B}irch and {S}winnerton-{D}yer
  conjecture for modular abelian varieties of analytic rank zero}, Math. Comp.
  \textbf{74} (2005), no.~249, 455--484.

\bibitem[AU96]{abbes-ullmo}
Ahmed Abbes and Emmanuel Ullmo, \emph{\`{A} propos de la conjecture de {M}anin
  pour les courbes elliptiques modulaires}, Compositio Math. \textbf{103}
  (1996), no.~3, 269--286. \MR{97f:11038}

\bibitem[CM00]{cremona-mazur}
J.\thinspace{}E. Cremona and B.~Mazur, \emph{Visualizing elements in the
  {S}hafarevich-{T}ate group}, Experiment. Math. \textbf{9} (2000), no.~1,
  13--28.

\bibitem[Eme03]{emerton:optimal}
Matthew Emerton, \emph{Optimal quotients of modular {J}acobians}, Math. Ann.
  \textbf{327} (2003), no.~3, 429--458.

\bibitem[GZ86]{gross-zagier}
B.~Gross and D.~Zagier, \emph{Heegner points and derivatives of
  \protect{${L}$}-series}, Invent. Math. \textbf{84} (1986), no.~2, 225--320.
  \MR{87j:11057}

\bibitem[Jet08]{jetchev-global}
Dimitar Jetchev, \emph{Global divisibility of {H}eegner points and {T}amagawa
  numbers}, Compos. Math. \textbf{144} (2008), no.~4, 811--826.

\bibitem[KL89]{kollog:finiteness}
V.\thinspace{}A. Kolyvagin and D.\thinspace{}Y. Logachev, \emph{Finiteness of
  the \protect{S}hafarevich-\protect{T}ate group and the group of rational
  points for some modular abelian varieties}, Algebra i Analiz \textbf{1}
  (1989), no.~5, 171--196.

\bibitem[Lan95]{lang:modular}
S.~Lang, \emph{Introduction to modular forms}, Springer-Verlag, Berlin, 1995,
  With appendixes by D. Zagier and W. Feit, Corrected reprint of the 1976
  original.

\bibitem[Maz77]{mazur:eisenstein}
B.~Mazur, \emph{Modular curves and the \protect{Eisenstein} ideal}, Inst.
  Hautes \'Etudes Sci. Publ. Math. (1977), no.~47, 33--186 (1978).

\bibitem[Maz78]{mazur:rational}
\bysame, \emph{Rational isogenies of prime degree (with an appendix by {D}.
  {G}oldfeld)}, Invent. Math. \textbf{44} (1978), no.~2, 129--162.

\bibitem[Rib90]{ribet:modreps}
K.\thinspace{}A. Ribet, \emph{On modular representations of \protect{${\rm
  {G}al}(\overline{\bf{Q}}/{\bf {Q}})$} arising from modular forms}, Invent.
  Math. \textbf{100} (1990), no.~2, 431--476.

\bibitem[Rub98]{rubin:eulerell}
K.~Rubin, \emph{Euler systems and modular elliptic curves}, Galois
  representations in arithmetic algebraic geometry (Durham, 1996), Cambridge
  Univ. Press, Cambridge, 1998, pp.~351--367.

\bibitem[SW08]{stein-wuthrich}
W.~Stein and C.~Wuthrich, \emph{Computations about {T}ate-{S}hafarevich groups
  using {I}wasawa theory}, preprint (2008), available at \hfill \\ {\tt
  http://modular.math.washington.edu/papers}.

\end{thebibliography}

\providecommand{\bysame}{\leavevmode\hbox to3em{\hrulefill}\thinspace}
\providecommand{\MR}{\relax\ifhmode\unskip\space\fi MR }
\providecommand{\MRhref}[2]{%
  \href{http://www.ams.org/mathscinet-getitem?mr=#1}{#2}
}
\providecommand{\href}[2]{#2}

\end{document}